\title{Intersections of Poisson \texorpdfstring{$k$}{k}-flats in hyperbolic space: completing the picture}
\begin{document}
\author{Tillmann Bühler\footnotemark[1]~ and Daniel Hug\footnotemark[2]}
\date{}
\maketitle
\renewcommand{\thefootnote}{\fnsymbol{footnote}}

\footnotetext[1]{Institute of Stochastics, Karlsruhe Institute of Technology, tillmann.buehler@kit.edu}
\footnotetext[2]{Institute of Stochastics, Karlsruhe Institute of Technology, daniel.hug@kit.edu}

\begin{abstract}
    In recent years there has been a lot of interest in the study of isometry invariant Poisson processes
	of \(k\)-flats in $d$-dimensional hyperbolic space $\mathbb{H}^d$, for  $0\le k\le d-1$.
	A phenomenon that has no counterpart in euclidean geometry arises in the investigation of the total \(k\)-dimensional volume
	\(F_r\) of the process inside a spherical observation window \(B_r\) of radius $r$ when one lets \(r\) tend to infinity.
	While \(F_r\) is asymptotically normally distributed for \(2k\leq d+1\), it has been shown to obey a nonstandard central limit theorem
	for \(2k>d+1\).

	The intersection process of order $m$, for $d-m(d-k) \geq 0$, of the original process \(\eta\) consists of all intersections of distinct
	flats
	\(E_1,\ldots,E_m \in \eta\) with \(\dim(E_1\cap\ldots\cap E_m) = d-m(d-k)\).
	For this intersection process, the total \(d-m(d-k)\)-dimensional volume \(F^{(m)}_r\) of the process in \(B_r\), again as \(r \to \infty\), is of particular interest.
	For \(2k \leq d+1\) it has been shown that \(F^{(m)}_r\) is again asymptotically normally distributed.
	For \(m \geq 2\), the limit is so far unknown, although it has been shown for certain \(d\) and \(k\) that it cannot be a normal distribution.

	We determine the limit distribution for all values of \(d,k,m\).
	In addition, we establish explicit rates of convergence in the Kolmogorov distance and discuss properties of the limit distribution.
    Furthermore we show that the asymptotic covariance matrix of the vector \((F^{(1)}_r,\ldots,F^{(m)}_r)^\top\) has full rank when \(2k < d+1\) and rank one
	when \(2k \geq d+1\).
\end{abstract}

\paragraph{\small MSC-classes 2020.}{\small Primary: 60D05, 53C65, 52A22, Secondary: 52A55, 60F05 }
\paragraph{\small Keywords.}{\small Poisson process, hyperbolic space, $k$-flat process,  limit theorem, Kolmogorov distance, Berry--Esseen bound}

	\section{Introduction}
\label{sec:intro}

Random geometric systems have been explored in euclidean space both from an applied and a theoretical point of view (see, e.g., the monographs
\cite{NET-006,NET-026,BHKM2018,MR1409145,CSKM,Coupier19,Haenggi13,MR973404,HugSch24,KendallMolchanov10,Matheron75,MR2283880,MR3753715,SW08}).
Recently, the investigation of random structures in hyperbolic space and other non-euclidean spaces has come into focus. In particular,
random graphs \cite{CFS22,FY20,hoot2023,rosen2024radialspanningtreehyperbolic}, percolation theory \cite{BS01,BJST09,HM22,hm2023}, 
random polytopes and geometric probabilities \cite{MR4753064,MR4333417,MR4177280,sönmez2024intersectionprobabilitiesflatshyperbolic,MR4177280,Kabluchko2020ANA}, 
random tessellations \cite{BPP18,GKT22,dac2023,Hug2018SplittingTI,Kabluchko2019TheTC}, 
random systems of hyperplanes \cite{HHT,BHT,KRT,KRT22+,Kabluchko2020FacesIR} and 
Boolean models \cite{T07,T09,TC13,hls2024} have been considered. 
In the present work,
we continue the exploration of the asymptotic fluctuations related to functionals of isometry invariant Poisson processes of $k$-flats ($k$-dimensional totally geodesic submanifolds) in $d$-dimensional hyperbolic space, which was initiated in \cite{HHT}.

To be more specific, take \(k \in \{1,\ldots,d-1\}\) and \(m\in \N\) so that \(d-m(d-k) \geq 0\) and let $\eta$ denote a stationary Poisson process of $k$-flats in hyperbolic space $\mathbb{H}^d$.
We consider the intersection process of order $m$ of the original Poisson process \(\eta\), which  consists of all intersections of distinct $k$-flats \(E_1,\ldots,E_m \in \eta\) with \(\dim(E_1\cap\ldots\cap E_m) = d-m(d-k)\).
For this process, the total \((d-m(d-k))\)-dimensional volume \(F^{(m)}_r\) of the process in a ball \(B_r\) of radius \(r>0\) is studied.
We also investigate the asymptotic covariance matrix of the random vector \(F^{[m]}_r \coloneqq (F^{(1)}_r,\ldots,F^{(m)}_r)^\top\).

In euclidean space, after standardization, the analogous functionals \(F^{(m)}_{r,\textbf{e}}\) all converge in law to a unit normal distribution (see \cite{heinrich,LPST14}, \cite[Chap.~9]{HugSch24}, and the literature cited there). In hyperbolic space a completely different picture gradually emerged:
The problem was first treated for hyperplanes (i.e., for \(k=d-1\)) in \cite{HHT}.
There it is shown that \(F^{(m)}_{r}\) satisfies a standard CLT for \(d=2,3\).
In this case, rates of convergence are  established which turn out to depend on the dimension $d$ and on the number of intersections $m$.   
In addition, it is proved that \(F^{(m)}_{r}\) cannot be asymptotically normally distributed when \(d\geq 4\) and \(m=1\) or for \(d\geq 7\) and general \(m\).
Moreover it is shown that the asymptotic covariance matrix of the random vector \(F^{[d]}_{r}\) has rank two for \(d=2\) and rank one for \(d \geq 3\).
The paper \cite{HHT} develops various auxiliary results that proved to be crucial for   subsequent investigations, including the present work.

In \cite{KRT}, the asymptotic distribution of \(F^{(1)}_r\) was finally determined explicitly for hyperplanes (that is, for \(k = d-1\)). In this case, the limit law is infinitely divisible and has no Gaussian component. 
The paper \cite{BHT} investigates the behavior of intersection processes for general \(k\)-flats.
 Among other results, it is shown that a standard CLT holds when \(2k \leq d+1\), and convergence rates are provided in these cases.
It is noteworthy that the convergence rate becomes slower, the  closer \(2k\) is to \(d+1\)   (cf.\ \cite[Thm.~1.2]{BHT}).
Based on the analysis of the case \(k=d-1\) in \cite{KRT}, the asymptotic distribution of \(F^{(1)}_r\) was determined for \(2k > d+1\).
It is again infinitely divisible without Gaussian component.
For \(m \geq 2\), the limit distribution has so far remained unknown and in fact, it was not even known whether such a limit distribution exists at all, although, as pointed out above, it has been shown for certain \(d\) and \(k\) that it cannot be a normal distribution.

\medskip

In the present work, we determine the limit distribution for all remaining values of \(d,k,m\).
To state the result, take \(2k > d+1\) and let $\zeta$ be an inhomogeneous Poisson process on $ [0, \infty) $ with intensity function
given by $s \mapsto \omega_{d-k}\cosh^{k} s\,\sinh^{d-k-1} s$. Here, for \(j \in \N\),  \(\omega_j \coloneqq 2 \pi^{j/2} / \Gamma(j/2)\) denotes surface measure of the euclidean unit sphere of dimension \(j-1\). 
Let $Z$ be the infinitely divisible, centred random variable given by
\begin{equation}\label{eq:LimitVariableZ}
Z := \lim_{T\to\infty}\Big(\sum_{s\in\zeta\cap[0,T]}\cosh^{-(k-1)} s-\frac{\omega_{d-k}}{d-k}\sinh^{d-k} T\Big),
\end{equation}
where the convergence is both in \(L^2\) and a.s.\ (compare \cite[Rem.~1.5]{BHT}).
We show in \Cref{prop:limit_distribution} that
\begin{equation*}
    \frac{F^{(m)}_r - \E[F^{(m)}_r]}{e^{r(k-1)}} \xlongrightarrow{D}
        C(d,k,m) \frac{\omega_k}{(k-1)2^{k-1}} Z,
\end{equation*}
where the constant \(C(d,k,m)\) is given by \eqref{eq:def_C} below.
This confirms a conjecture from \cite{BHT} stating that \(F^{(m)}_r\) does not satisfy a standard CLT when \(2k > d+1\).
We also give explicit rates of convergence in the Kolmogorov distance.
\Cref{thm:rates_intersection} is the general result and \Cref{cor:rates} a simpler version, while Theorem \ref{thm:convergence_rates_1} treats the special case $m=1$.

We further show in \Cref{prop:covariance_matrix} that the covariance matrix of an appropriately scaled \(F^{[m]}_{r}\) has full rank when \(2k < d+1\) and rank one otherwise.
In the former case, we determine the covariance matrix, which is given by \eqref{eq:covariance_matrix} below.

\medskip

All results in this work are ultimately based  on a careful analysis of the Wiener--Itô chaos decomposition of \(F^{(m)}_r\).
The crucial observation is the identity \eqref{eq:decomp_F_m_general} that allows us to write
\[ F^{(m)}_r - \E[F^{(m)}_r] = C(d,k,m) (F^{(1)}_r - \E[F^{(1)}_r]) + W_r, \]
where the term \(W_r\) is asymptotically negligible as \(r \to \infty\) when \(2k > d+1\).
This reduces the problem of finding the asymptotic distribution of \(F^{(m)}_r\)  to the case \(m=1\), which has already been resolved in \cite{KRT} for $k=d-1$ and in \cite{BHT} for general $k$.

From the above identity it also follows that for \(m \geq 2\) the asymptotic covariance matrix of \(F^{[m]}\) has rank one if and only if \(W_r\) is of lower order than \(F^{1}_r\), which is the case precisely when \(2k \geq d+1\).

For the convergence rates, we use an inequality of Berry--Esseen type to relate closeness of the characteristic functions to closeness in Kolmogorov distance.
The inequalities needed for the characteristic functions are obtained by means of basic calculus.

\medskip

The paper is structured as follows. 
In \Cref{sec:preliminaries}, the main objects of interest are defined and some notation is introduced.
We also collect auxiliary results that will be needed later on.

In \Cref{sec:comparison}, we compare the euclidean and the hyperbolic setting and explain some of the underlying reasons for the different behavior observed in non-euclidean space.

The following three sections deal with the case \(2k > d+1\), where \(F^{(m)}_r\) is not asymptotically normally distributed.
In \Cref{sec:qualitative}, the limit distribution is determined, then convergence rates are established in \Cref{sec:convergence_rates}.
In \Cref{sec:density_approximation}, the density of the limit distribution is approximated numerically and the asymptotic dependence of the standardized cumulants of the limit distribution $Z_{d,k}$ on the dimension $d$ of the space and the dimension $k$ of the flats is analyzed.

Finally, we investigate the behavior of the asymptotic covariance matrices in \Cref{sec:covariance_matrix}.

\section{Preliminaries}
\label{sec:preliminaries}

In this section, we recall some notation and provide background information from \cite{BHT} relevant for the present purpose.
For further details we refer to \cite{HHT} and the literature cited there. 

Let \(d \geq 2\) and \(k\in\{0,\ldots,d-1\}\).
We  denote the space of \(k\)-flats in \(\Hy^d\) and \(\R^d\) by \(A_\textbf{h}(d,k)\) and \(A_\textbf{e}(d,k)\) respectively.
For more information on these spaces we refer the reader to \cite{BHT}.
General information on hyperbolic geometry can be found in \cite{Ra19}.
Since our main focus is on hyperbolic space, we introduce the convention \(A(d,k) \coloneqq A_\textbf{h}(d,k)\).
From here on, we will use this convention for any object we define in both hyperbolic and euclidean space and only explicitly use the index `\(\textbf{h}\)' in order to emphasize that we are working in \(\Hy^d\).

For \(\bullet \in \{\textbf{h},\textbf{e}\}\), let \(\mu_k\) denote the isometry invariant measure on \(A_\bullet(d,k)\), normalized as in \cite{BHT}.
We assume that the intensity parameter \(t\) used in \cite{BHT} is  equal to \(1\).
A Poisson process on \(A_\bullet(d,k)\) with intensity measure $\mu_k$  will be denoted by \(\eta\).
For \(m \in \N\) with \(d-m(d-k)\ge 0\) and \(r>0\) we study the functional
\[ F^{(m)}_{r,\bullet} = \frac{1}{m!}\sum_{(E_1,\ldots,E_m) \in \eta^m_{\neq}} \mathcal{H}_\bullet ^{d-m(d-k)}(E_1\cap\ldots\cap E_m \cap B_r)
    \one\{ \dim(E_1\cap\ldots\cap E_m) = d - m(d-k) \}, \]
where \(\mathcal{H}^i_\bullet\) is the \(i\)-dimensional Hausdorff measure, \(B_r\) is a ball of radius \(r\) with some arbitrary center in \(\Hy^d\)/\(\R^d\) and the sum extends over all \(m\)-tuples of distinct $k$-flats \(E_1,\ldots,E_m \in \eta\).
Hence \(F^{(m)}_{r,\bullet}\) is the total $(d-m(d-k))$-volume of the \(m\)-th order intersection process of \(\eta\) in a ball of radius $r$.
Clearly, for each $r>0$  the functional $F^{(m)}_{r,\bullet}$ is a Poisson $U$-statistic of order $m$.

With the definition
\[ f^{(m)}_{r,\bullet}(E_1,\ldots,E_m) \coloneqq \mathcal{H}_\bullet^{d-m(d-k)}(E_1\cap\ldots\cap E_m \cap B_r)
    \one\{ \dim(E_1\cap\ldots\cap E_m) = d - m(d-k) \},  \]
for \(E_1,\ldots,E_m \in A_\bullet(d,k)\), we have
\[ F^{(m)}_{r,\bullet} = \frac{1}{m!}\sum_{(E_1,\ldots,E_m) \in \eta^m_{\neq}} f^{(m)}_{r,\bullet}(E_1,\ldots,E_m).\]
As in \cite[Eq.~(2.3)]{BHT} we get the Wiener--Itô chaos decomposition
\begin{equation}
\label{eq:chaos_decomp}
    F^{(m)}_{r,\bullet} = \E[F^{(m)}_{r,\bullet}] + I_1(f^{(m)}_{r,1,\bullet}) + \cdots + I_m(f^{(m)}_{r,m,\bullet}),
\end{equation}
where the functions \(f^{(m)}_{r,i,\bullet}\), \(i=1,\ldots,m\), are defined by
\[\begin{aligned}
    &f^{(m)}_{r,i,\bullet} (E_1,\ldots,E_i) \coloneqq\\
    &\qquad\binom{m}{i} \frac{1}{m!} \int_{A(d,k)^{m-i}} f^{(m)}_{r,\bullet} (E_1, \ldots, E_i, E_{i+1}, \ldots, E_m) \,\mu_k^{m-i}(\dint\,(E_{i+1},\ldots,E_{m})),
\end{aligned} \]
for \(E_1,\ldots, E_i \in A_\bullet(d,k)\).
By the isometry property of the Itô integral, it follows that the summands on the right-hand side of  \eqref{eq:chaos_decomp} are uncorrelated,  \(\E[I_i(f^{(m)}_{r,i})] = 0\) and
\begin{equation*}
    \V(I_i(f^{(m)}_{r,i,\bullet})) = i! \int_{A_\bullet(d,k)^i} (f^{(m)}_{r,i,\bullet}(E_1,\ldots,E_i))^2 \,\mu^i_k(\dint\,(E_1,\ldots,E_i)) \eqqcolon i! \cdot A^{(m)}_{r,i,\bullet},
\end{equation*}
for \(i=1,\ldots,m\).
This implies
\[ \V(F^{(m)}_{r,\bullet}) = \sum_{i=1}^m \V(I_i(f^{(m)}_{r,i,\bullet})) = \sum_{i=1}^m i! A^{(m)}_{r,i,\bullet}, \]
which is precisely the statement of \cite[Eq.~(2.8)]{BHT}.

By \cite[Eq.~(2.10)]{BHT} it holds that
\begin{equation}
\label{eq:crofton_applied}
    \begin{aligned}
        f^{(m)}_{r,1,\bullet}(E)
            &= \binom{m}{1} \frac{1}{m!} \left(\frac{\omega_{d+1}}{\omega_{k+1}}\right)^{m-1} \frac{\omega_{d-m(d-k)+1}}{\omega_{d-(d-k)+1}}
                \mathcal{H}_\bullet^{d-(d-k)}(E \cap B_r)\\
            &= C(d,k,m) \mathcal{H}_\bullet^{k}(E \cap B_r),
    \end{aligned}
\end{equation}
for  \(E \in A_\bullet(d,k)\), where
\begin{equation}
    \label{eq:def_C}
    C(d,k,m) \coloneqq \frac{1}{(m-1)!}  \left(\frac{\omega_{d+1}}{\omega_{k+1}}\right)^{m-1} \frac{\omega_{d-m(d-k)+1}}{\omega_{k+1}}.
\end{equation}
By the definition of \( F^{(1)}_{r,\bullet}\) we have
\begin{equation}
\label{eq:identity_first_term}
    I_1(f^{(m)}_{r,1\bullet})
        = C(d,k,m) \left( F^{(1)}_{r,\bullet} - \E[F^{(1)}_{r,\bullet}]  \right).
\end{equation}

The following statement appears as Lemma 2.5 in \cite{BHT}. We include it here for easier reference and comparison.
The notation \(A_r \asymp B_r\) is to be understood as \(c \cdot B_r \leq A_r \leq C \cdot B_r\)
for all \(r\geq 1\) and some positive constants \(c, C\).
Similarly, we adopt the notation \(A_r \lesssim B_r\) (\(A_r \gtrsim B_r\)) when \(A_r \leq C \cdot B_r\) (\(A_r \geq C \cdot B_r\))
for all \(r \geq 1\) and some positive constant \(C\).
Unless explicitly stated otherwise, the constants \(c,C\) may depend on \(d,k\).
\begin{lemma}
\label{lem:A_asymptotics}
   If \(r\geq 1\), \(k\in\{0,\ldots,d-1\}\) and \(i\in\{1,\ldots,m\}\), then
    \begin{align*}
        A_{r,i,\textbf{h}}^{(m)} \ &\asymp
        \begin{cases}
            e^{r(d-1)}&: \, 2i(d-k)> d-1,\\
            re^{r(d-1)}&: \, 2i(d-k)= d-1,\\
            e^{2r(d-i(d-k)-1)}&: \, 2i(d-k)<d-1.
        \end{cases}
    \end{align*}
\end{lemma}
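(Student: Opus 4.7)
My strategy is to reduce $A^{(m)}_{r,i,\textbf{h}}$ to an explicit one-dimensional integral via integral geometry, and then to extract the asymptotics by splitting according to the sign of $2i(d-k)-(d-1)$.

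First, generalising \eqref{eq:crofton_applied}, an $(m-i)$-fold application of the hyperbolic Crofton formula gives
\[f^{(m)}_{r,i,\textbf{h}}(E_1,\dots,E_i)=c(d,k,m,i)\,\mathcal{H}^{d-i(d-k)}_\textbf{h}(E_1\cap\dots\cap E_i\cap B_r)\]
on the event that the intersection has the expected dimension $j\coloneqq d-i(d-k)$, for an explicit non-zero constant. A standard iterated Blaschke--Petkantschin-type identity in hyperbolic integral geometry then transforms the integral over $A(d,k)^i$ into an integral over $A(d,j)$, giving
\[A^{(m)}_{r,i,\textbf{h}}\asymp\int_{A(d,j)}\mathcal{H}^{j}_\textbf{h}(F\cap B_r)^2\,\mu_j(\mathrm{d} F).\]

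Next I would parameterise $A(d,j)$ by the hyperbolic distance $t\ge 0$ of $F$ from the centre of $B_r$ together with an orthogonal direction; the invariant measure then disintegrates with density proportional to $\cosh^{j}(t)\sinh^{d-j-1}(t)\,\mathrm{d}t$ (in direct analogy with the intensity of $\zeta$ in \eqref{eq:LimitVariableZ}). The hyperbolic Pythagorean relation shows that $F\cap B_r$ is isometric to a $j$-dimensional geodesic ball whose radius $r_t$ satisfies $\cosh r=\cosh r_t\cosh t$ for $t\le r$, so that $\mathcal{H}^{j}_\textbf{h}(F\cap B_r)^2\asymp e^{2(j-1)(r-t)}$ for $j\ge 2$, with straightforward sub-exponential modifications for $j\in\{0,1\}$. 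Combining these estimates with $\cosh^{j}(t)\sinh^{d-j-1}(t)\asymp e^{(d-1)t}$ for $t\ge 1$ leads (up to sub-exponential factors) to
\[A^{(m)}_{r,i,\textbf{h}}\asymp e^{2(j-1)r}\int_0^r e^{(d-1-2(j-1))t}\,\mathrm{d}t.\]

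Since $d-1-2(j-1)=2i(d-k)-(d-1)$, the three cases of the lemma correspond exactly to the three sign cases of this exponent. Direct evaluation of the one-dimensional integral followed by recombination with the prefactor $e^{2(j-1)r}$ yields $e^{r(d-1)}$ when the exponent is positive, an additional factor of $r$ at the critical value, and $e^{2r(d-i(d-k)-1)}$ when the exponent is negative; matching lower bounds follow by restricting each integral to a region where the integrand is comparable to its maximum. The main obstacle is the iterated Blaschke--Petkantschin reduction for general $i\ge 2$: one must verify that, after integrating over the $i$ flats under the constraint of a common $j$-dimensional intersection, the effective measure on the intersection flat is proportional to $\mu_j$. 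Once this is in place, the remainder is a routine asymptotic computation.
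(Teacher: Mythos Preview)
The paper does not give its own proof of this lemma: it is quoted verbatim as \cite[Lem.~2.5]{BHT} and included only ``for easier reference and comparison.'' From the way the paper later invokes \cite[Eq.~(2.9)]{BHT} together with \cite[Lem.~8]{HHT} (see Section~\ref{sec:covariance_matrix}), one sees that the argument in \cite{BHT} proceeds exactly along the lines you sketch: reduce $A^{(m)}_{r,i,\textbf{h}}$ via Crofton to a constant times $\int_{A(d,j)}\mathcal{H}^j(F\cap B_r)^2\,\mu_j(\mathrm{d}F)$ with $j=d-i(d-k)$, parameterise $F$ by its distance $s$ to the centre (density $\cosh^j s\,\sinh^{d-j-1}s$), and evaluate the resulting one-dimensional integral asymptotically. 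Your identification of the exponent $d-1-2(j-1)=2i(d-k)-(d-1)$ and the ensuing trichotomy is precisely the mechanism behind the three cases, so your plan is correct and coincides with the cited proof; the ``iterated Blaschke--Petkantschin'' step you flag is not a genuine obstacle, since the pushforward of $\mu_k^i$ under the intersection map is isometry-invariant and hence a constant multiple of $\mu_j$.
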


\section{Comparison with the euclidean case}
\label{sec:comparison}

The euclidean analogue of \Cref{lem:A_asymptotics} is
\begin{equation}
\label{eq:A_asymptotics_euc}
    A^{(m)}_{r,i,\textbf{e}} \asymp r^{2d - i(d-k)}.
\end{equation}
To prove this, use \cite[Eq.~(2.9)]{BHT} together with the fact that
\[ \int_{A_\textbf{e}(d,k)} \mathcal{H}^k_\textbf{e}(E \cap B_r)^l \mu_k(\dint E) \asymp r^{d+k(l-1)} \]
for \(0\leq k<d\) and \(l\in\N\).
This is the euclidean analogue of \cite[Lem.~8]{HHT}.
Since the proof is conceptually identical and technically easier in euclidean space, we omit it here.

Using the chaos decomposition \eqref{eq:chaos_decomp} and \eqref{eq:identity_first_term}, we obtain
\begin{equation}
\label{eq:decomp_F_m_general}
    F^{(m)}_{r,\bullet} - \E[F^{(m)}_{r,\bullet}]
        = C(d,k,m) (F^{(1)}_{r,\bullet} - \E[F^{(1)}_{r,\bullet}]) + \sum_{i=2}^m I_i(f^{(m)}_{r,i,\bullet})
\end{equation}
for \(\bullet\in\{\textbf{e}, \textbf{h}\}\).
For \(\bullet = \textbf{e}\), it follows from \eqref{eq:A_asymptotics_euc} that the first term in this decomposition has variance of order \(r^{d+k}\),
while the remaining terms have variance of order at most \(r^{d+k-(d-k)}\).
Hence, \(\V(F^{(m)}_{r,\textbf{e}}) \asymp r^{d+k}\) and we can write
\[ \frac{F^{(m)}_{r,\textbf{e}} - \E[F^{(m)}_{r,\textbf{e}}]}{\sqrt{r^{d+k}}} = C(d,k,m) \frac{F^{(1)}_{r,\textbf{e}} - \E[F^{(1)}_{r,\textbf{e}}]}{\sqrt{r^{d+k}}} + W_r, \]
with \(W_r \xlongrightarrow{\P} 0\) as \(r \to \infty\) (note that \(\V(W_r) \lesssim r^{-(d-k)}\)).
From these facts we can draw two conclusions:
Firstly, if \(X\) is a random variable with
\[ \frac{F^{(1)}_{r,\textbf{e}} - \E[F^{(1)}_{r,\textbf{e}}]}{\sqrt{r^{d+k}}} \xlongrightarrow{D} X, \]
then (by Slutzky's lemma)
\[ \frac{F^{(m)}_{r,\textbf{e}} - \E[F^{(m)}_{r,\textbf{e}}]}{\sqrt{r^{d+k}}} \xlongrightarrow{D} C(d,k,m) X. \]
Secondly, the asymptotic covariance matrix
\[\begin{aligned}
     \lim_{r\to\infty} \frac{1}{r^{d+k}} \Sigma(F^{(1)}_{r,\textbf{e}}, \ldots, F^{(m)}_{r,\textbf{e}})
    &= \lim_{r\to\infty} \frac{\V(F^{(1)}_{r,\textbf{e}})}{r^{d+k}} \cdot \Big( C(d,k,a)C(d,k,b) \Big)_{a,b=1,\ldots,m}
\end{aligned}\]
has rank \(1\).
For \(k=d-1\) this was shown in \cite[Thm.~5.1(i)]{heinrich}, although the above reasoning implies that this is true for any \(k<d\).
It is well known that \(X\) follows a normal distribution, however this is not the point we want to make here.

The behavior described above is in contrast to the hyperbolic setting:
There we have \(W_r \xlongrightarrow{\P} 0\) only in the case \(2k \geq d+1\) (compare \Cref{lem:A_asymptotics}).
For \(2k<d+1\), the terms \(A^{(m)}_{r, 1, \textbf{h}},\ldots,A^{(m)}_{r,m,\textbf{h}}\) are all of the same order \(e^{r(d-1)}\),
hence all terms of the chaos decomposition contribute to the asymptotic behavior.

This is reflected in the fact that for \(k=d-1\), the hyperbolic analogue of the asymptotic covariance matrix has full rank for \(2k < d+1\) (i.e., \(d=2\))
and rank \(1\) for \(2k \geq d+1\) (i.e., \(d\geq 3\)) which is shown in \cite[Sec.~4.5]{HHT}.
We extend this result to \(k<d\) in \Cref{sec:covariance_matrix}.

The fact that the first term of the chaos expansion determines the asymptotic behavior for \(2k \geq d+1\) allows us to reduce
the problem of finding the asymptotic distribution of \(F^{(m)}_{r,\textbf{h}}\) to the case \(m=1\).
From \eqref{eq:decomp_F_m_general} it follows almost immediately that the limit distribution of \(e^{-r(k-1)}(F^{(m)}_{r,\textbf{h}} - \E[F^{(m)}_{r,\textbf{h}}])\) differs from the one of \(e^{-r(k-1)}(F^{(1)}_{r,\textbf{h}} - \E[F^{(1)}_{r,\textbf{h}}])\) only by a constant factor.
The latter distribution has been determined for \(k = d-1\) in \cite{KRT} and for general \(k<d\) in \cite{BHT}.
Curiously, it is not a normal distribution.

In summary, one can say that in the hyperbolic setting there are two phenomena which lead to behavior that diverges from what we know in the euclidean setting:
Firstly, for \(2k < d+1\), all terms of the chaos distribution have variance of the same order, leading to a non-degenerate covariance matrix.
Secondly, for \(2k > d+1\), \(F^{(1)}_{r,\textbf{h}}\) follows a nonstandard central limit theorem, leading to nonstandard limits for the functionals of intersection processes \(F^{(m)}_{r,\textbf{h}}\), \(m \geq 2\).
Note that the second phenomenon has nothing to do with intersection processes per se, since for \(m=1\) no intersections are considered.

\section{Determination of the limit distribution}
\label{sec:qualitative}

We now determine the asymptotic distribution of \(F^{m}_r\), thus resolving in particular the conjecture stated in \cite{BHT}.
Note that the following proposition is a purely qualitative statement, convergence rates will be derived in the next section.

\begin{proposition}
\label{prop:limit_distribution}
If $2k>d+1$ and \(d-m(d-k) \geq 0\), then
\begin{equation*}
    \frac{F^{(m)}_r - \E[F^{(m)}_r]}{e^{r(k-1)}} \xlongrightarrow{D}
        C(d,k,m) \frac{\omega_k}{(k-1)2^{k-1}} Z
\end{equation*}
as $r\to\infty$, where \(Z\) is given by \eqref{eq:LimitVariableZ} and \(C(d,k,m)\) is given by \eqref{eq:def_C}.
\end{proposition}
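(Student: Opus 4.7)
My strategy is to exploit the orthogonal decomposition \eqref{eq:decomp_F_m_general},
\[ F^{(m)}_r - \E[F^{(m)}_r] = C(d,k,m)\bigl(F^{(1)}_r - \E[F^{(1)}_r]\bigr) + W_r, \qquad W_r \coloneqq \sum_{i=2}^m I_i(f^{(m)}_{r,i,\textbf{h}}), \]
in order to reduce the problem to the already resolved case \(m=1\). Concretely, the plan is to show that the remainder \(W_r\) vanishes in \(L^2\) after rescaling by \(e^{-r(k-1)}\), so that the limit law of the left-hand side coincides with that of \(C(d,k,m)(F^{(1)}_r - \E[F^{(1)}_r])/e^{r(k-1)}\) by Slutsky's lemma.

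The variance of \(W_r\) is controlled using the orthogonality of the chaos integrals, which gives \(\V(W_r) = \sum_{i=2}^m i!\,A^{(m)}_{r,i,\textbf{h}}\). I would then apply \Cref{lem:A_asymptotics} to each \(i \in \{2,\ldots,m\}\) and verify that the resulting bound is \(o(e^{2r(k-1)})\). When \(2i(d-k) \geq d-1\), the lemma yields \(A^{(m)}_{r,i,\textbf{h}} \asymp e^{r(d-1)}\) (possibly with an extra polynomial factor \(r\) in the equality case), which is strictly smaller than \(e^{2r(k-1)}\) precisely because the hypothesis \(2k > d+1\) is equivalent to \(d-1 < 2(k-1)\). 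When \(2i(d-k) < d-1\), the lemma gives \(A^{(m)}_{r,i,\textbf{h}} \asymp e^{2r(d-i(d-k)-1)}\), and the strict inequality \(d - i(d-k) - 1 < k - 1\) rearranges to \((i-1)(d-k) > 0\), which holds for every \(i \geq 2\) since \(k \leq d-1\). Consequently \(\V(W_r) = o(e^{2r(k-1)})\), so \(W_r/e^{r(k-1)} \to 0\) in \(L^2\) and hence in probability.

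To finish, I would invoke the limit theorem for \(F^{(1)}_r\) in the regime \(2k > d+1\) established in \cite{KRT} for \(k=d-1\) and in \cite{BHT} for general \(k<d\), which asserts that \((F^{(1)}_r - \E[F^{(1)}_r])/e^{r(k-1)}\) converges in distribution to \(\frac{\omega_k}{(k-1)2^{k-1}} Z\); this is consistent with the \(m=1\) case of the claim since one checks directly that \(C(d,k,1) = 1\). Multiplying by the constant \(C(d,k,m)\) and combining with the \(L^2\)-negligible remainder via Slutsky's lemma yields the proposition. The only delicate step is the case distinction in the variance bound: one needs the strict inequality \(2k > d+1\) for the first chaos to dominate exponentially over all higher ones, since at the critical threshold \(2k = d+1\) the higher chaoses become of comparable order and the simple Slutsky reduction would no longer suffice — this is precisely the threshold at which the behavior of \(F^{(m)}_r\) transitions out of the standard CLT regime treated in \cite{BHT}.
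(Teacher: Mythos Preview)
Your proposal is correct and follows essentially the same route as the paper's own proof: both use the decomposition \eqref{eq:decomp_F_m_general}, show via \Cref{lem:A_asymptotics} that the higher-order chaos terms are \(o(e^{2r(k-1)})\) in variance, invoke the \(m=1\) limit from \cite[Thm.~1.4]{BHT}, and conclude with Slutsky's lemma. Your explicit case distinction in applying \Cref{lem:A_asymptotics} is a bit more detailed than the paper's terse ``is of lower order'' remark, but the argument is identical in substance.
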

\begin{proof}
Since \(2k > d+1\), it follows from \Cref{lem:A_asymptotics} that \(A^{(m)}_{r,1} \asymp e^{2r(k-1)}\) and that
\(A^{(m)}_{r,i}\) is of lower order for \(i=2,\ldots,m\), i.e., \( A^{(m)}_{r,i}/e^{2r(k-1)} \to 0, \) as \(r\to \infty\).
This implies that \( \V(I_i(f^{(m)}_{r,i}) / e^{r(k-1)} )\) goes to \(0\) for \(r \to \infty\) and hence
\begin{equation*}
    I_i(f^{(m)}_{r,i}) / e^{r(k-1)} \xlongrightarrow{\P} 0, \qquad i=2,\ldots,m.
\end{equation*}
Using \eqref{eq:decomp_F_m_general}, we can write
\begin{equation}
\label{eq:decomp_F_m}
    \begin{aligned}
        \frac{F^{(m)}_r - \E[F^{(m)}_r]}{e^{r(k-1)}}
            &= C(d,k,m) \frac{F^{(1)}_r - \E[F^{(1)}_r]}{e^{r(k-1)}} + \sum_{i=2}^m \frac{I_i(f^{(m)}_{r,i})}{e^{r(k-1)}}.
    \end{aligned}
\end{equation}
The first term converges in distribution to \(C(d,k,m) \frac{\omega_k}{(k-1)2^{k-1}} Z\) by \cite[Thm.~1.4]{BHT} (note that the prefactor contains a typo which we corrected here)
and the second term converges to \(0\) in probability by the previous consideration.
The result now follows by Slutsky's lemma.
\end{proof}

\section{Convergence rates}
\label{sec:convergence_rates}

Having established the limit distribution, we now work a bit harder to get bounds on the speed of convergence.
Throughout this whole section, let \(2k > d+1\) and \(d-m(d-k) \geq 0\).

We start by treating the case \(m=1\).
For this purpose, let
\[ Y_r \coloneqq \frac{F^{(1)}_r - \E[F^{(1)}_r]}{e^{r(k-1)}}, \qquad Y \coloneqq \frac{\omega_k}{(k-1)2^{k-1}} Z, \]
for \(r > 0\).

\begin{theorem}
\label{thm:convergence_rates_1}
    If
    \[ \beta_{d,k} \coloneqq \frac{2(2k -d -1)}{k+d+4},\]
    then there is a constant \(C>0\) depending only on \(d,k\) such that
    \[ d_K(Y_r, Y) \leq C \cdot \exp\left(- \beta_{d,k}\cdot r\right), \]
    for \(r>0\) when \(k \geq 4\), and
    \[ d_K(Y_r, Y) \leq C \cdot (r+1) \cdot \exp\left(-\beta_{d,k}\cdot r\right), \]
    for \(r>0\) and \(k=3\).
\end{theorem}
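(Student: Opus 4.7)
My approach would be to combine Esseen's smoothing inequality with an explicit L\'evy--Khintchine analysis of the characteristic functions of $Y_r$ and $Y$, and to optimize a single truncation parameter. Since $Z$ (and hence $Y$) is infinitely divisible with nondegenerate L\'evy measure, it admits a bounded Lebesgue density (cf.\ \cite{BHT,KRT}), and Esseen's inequality gives, for every $T > 0$,
\begin{equation*}
    d_K(Y_r, Y) \;\leq\; c_1 \int_{-T}^{T}\frac{|\phi_{Y_r}(u) - \phi_Y(u)|}{|u|}\,\dint u \;+\; \frac{c_2}{T},
\end{equation*}
with $c_1, c_2$ depending only on $d, k$; the whole proof is then reduced to a bound on the characteristic function difference and a choice of $T$.

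By the isometry invariance of $\mu_k$, $F^{(1)}_r$ is distributionally equal to a Poisson integral over the one-dimensional process $\zeta$ from \eqref{eq:LimitVariableZ}: a flat at distance $s$ from the centre contributes the $k$-volume $g_r(s) = \omega_k \int_0^{\rho(s)}\sinh^{k-1}(t)\,\dint t$ with $\cosh\rho(s) = \cosh(r)/\cosh(s)$. Setting $h_r(s) := g_r(s)/e^{(k-1)r}$, $c := \omega_k/((k-1)2^{k-1})$ and $\mu(\dint s) := \omega_{d-k}\cosh^k(s)\sinh^{d-k-1}(s)\,\dint s$, one has $\phi_{Y_r}(u) = \exp(\psi_r(u))$ and $\phi_Y(u) = \exp(\psi(u))$, where
\begin{equation*}
    \psi_r(u) = \int_0^r\bigl(e^{iu h_r(s)} - 1 - iu h_r(s)\bigr)\,\mu(\dint s)
\end{equation*}
and $\psi(u)$ is the analogous expression with $h_r$ replaced by $c\cosh^{-(k-1)}$ and the integration extended to $[0,\infty)$; moreover $|\phi_{Y_r} - \phi_Y| \leq |\psi_r - \psi|$ because both log-characteristic functions have nonpositive real part.

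I would bound $|\psi_r - \psi|$ by splitting the domain of integration into the tail $(r,\infty)$ and the bulk $[0,r]$. On the tail, $|e^{ix} - 1 - ix| \leq x^2/2$ together with $\int_r^\infty\cosh^{-2(k-1)}(s)\,\mu(\dint s) \asymp e^{-(2k-d-1)r}$ yields a contribution of order $u^2 e^{-(2k-d-1)r}$. On the bulk, an expansion of the relation $\cosh\rho(s) = \cosh(r)/\cosh(s)$ gives a pointwise approximation of $h_r(s)$ by $c\cosh^{-(k-1)}(s)$ with error geometrically small in $r-s$ when $k \geq 4$, and the boundary layer $s \approx r$ is handled by direct bounds using $\mu([r-1,r]) \asymp e^{(d-1)r}$. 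The elementary bound $|e^{ia} - e^{ib} - i(a-b)| \lesssim |a-b|(|a|+|b|)$ with $a = u h_r(s)$, $b = u c\cosh^{-(k-1)}(s)$ converts these approximations into the same order $u^2 e^{-(2k-d-1)r}$ on the Taylor regime, while for $s$ with $|uh_r(s)|$ not small one uses $|e^{ia} - e^{ib}| \leq \min(|a-b|, 2)$ together with the smallness of $h_r - c\cosh^{-(k-1)}$. Substituting into Esseen's inequality and optimizing over $T$ then produces the exponent $\beta_{d,k}$.

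The hardest part is the bulk estimate and the bookkeeping of polynomial factors in $u$, which determines the final exponent: any slack worsens $\beta_{d,k}$. In the special case $k = 3$ the primitive $\int_0^\rho\sinh^2(t)\,\dint t = \sinh(2\rho)/4 - \rho/2$ has a subleading term that is linear in $\rho$ rather than exponentially small; this propagates to a pointwise bound $|h_r(s) - c\cosh^{-(k-1)}(s)| \lesssim \rho(s)\,e^{-2r}$, which is of size $r e^{-2r}$ at small $s$, and after integration leaves an extra factor of $r+1$ in the final estimate, accounting for the distinction between the two cases in the statement.
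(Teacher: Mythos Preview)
Your overall strategy matches the paper's: Esseen's smoothing inequality combined with an explicit comparison of the log-characteristic functions via the L\'evy--Khintchine representation, together with the boundedness of the density of $Y$. The reduction to a one-dimensional integral over the distance $s$, the identification of your $h_r(s)$ with the paper's $g_r(s)$, and your explanation for the extra factor $r+1$ when $k=3$ are all on target.

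The gap is in the bulk estimate and the subsequent optimization. You assert that on the bulk $[0,r]$ the inequality $|e^{ia}-e^{ib}-i(a-b)|\lesssim|a-b|(|a|+|b|)$ produces a contribution of order $u^{2}e^{-(2k-d-1)r}$, and that optimizing over $T$ alone then yields $\beta_{d,k}$. Neither holds in general. One has $\int_{0}^{r}|g_r-g|\,g\,\dint\mu\asymp e^{-(2k-d-1)r}+e^{-2r}$ for $k\ge 4$; indeed $g(0)-g_r(0)\asymp e^{-2r}$, so the region $s\approx 0$ alone forces a term of size $e^{-2r}$, which dominates once $2k>d+3$. Your quadratic-type bound therefore gives only $|\psi_r-\psi|\lesssim u^{2}e^{-2r}$ on the bulk, and a one-parameter optimization over $T$ gives at best the rate $e^{-2r/3}$. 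This is strictly weaker than $e^{-\beta_{d,k}r}$ whenever $5k>4d+7$ (for instance $k=d-1$ and $d\ge 13$). The alternative you sketch, splitting according to whether $|u\,h_r(s)|$ is small, amounts to a $u$-dependent threshold $s^*(u)\approx\log|u|/(k-1)$; this is not the free parameter that produces the stated exponent, and the resulting optimization does not balance to $\beta_{d,k}$ either.

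What the paper does instead is introduce an \emph{independent} cutoff $s_0\in[0,r]$, use the linear bound $|q(a)-q(b)|\le 2|a-b|$ on $[0,s_0]$ and the quadratic bound $|q(a)|+|q(b)|\le\tfrac12(a^2+b^2)$ on $[s_0,\infty)$, and arrive at
\[
d_K(Y_r,Y)\lesssim T\bigl(e^{-(k-1)r+(d-1)s_0}+h_k(r)\,e^{-2r+(d-k+2)s_0}\bigr)+T^{2}e^{-(2k-d-1)s_0}+T^{-1}.
\]
The specific exponent $\beta_{d,k}=\tfrac{2(2k-d-1)}{k+d+4}$ emerges only from a \emph{two-parameter} optimization over $(s_0,T)=(\alpha r,e^{\beta r})$, which the paper casts as a small linear program. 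Without this additional degree of freedom the claimed rate cannot be reached in general.
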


\begin{remark}
    Note that the restrictions on \(k\) and \(d\) ensure that \(k\) is always at least \(3\) and that \(k=3\) is only possible when \(d=4\).
    Note further that \(0<\beta_{d,k}<1\) for all admissible values of \(d,k\).
    For fixed \(d\), \(\beta_{d,k}\) increases for increasing admissible \(k\).
    Roughly speaking, it holds for large values of \(d\) that \(\beta_{d,k} \approx 0\) when \(k \approx d/2\) and \(\beta_{d,k} \approx 1\) when \(k \approx d\).
\end{remark}

In the proof of Theorem \ref{thm:convergence_rates_1} we will make use of the fact that for \(r > 0\) the characteristic functions of \(Y\) and \(Y_r\) have been determined in \cite{BHT} as
\[\begin{aligned}
    \varphi_r(t) \coloneqq \varphi_{Y_r}(t) &= \exp\left( \omega_{d-k} \int_0^\infty (e^{it g_r(s)} - 1 - it g_r(s))\, \mu(\dint s) \right),\\
    \varphi(t) \coloneqq \varphi_{Y}(t) &= \exp\left( \omega_{d-k} \int_0^\infty (e^{it g(s)} - 1 - it g(s))\, \mu(\dint s) \right),
\end{aligned}\]
for \(t\in\R\), where \(g, g_r \colon [0,\infty) \to [0,\infty)\), for \(r\geq 0\), are given by
\[\begin{aligned}
    g_r(s) &= \one\{s\leq r\} \omega_k e^{-(k-1)r} \int_0^{\arcosh\left(\frac{\cosh(r)}{\cosh(s)}\right)} \sinh^{k-1}(u) \,\dint u,\\
    g(s) &= \frac{\omega_k}{(k-1)2^{k-1}} \cosh^{-(k-1)}(s),
\end{aligned}\]
for \(s \geq 0\), and we use the abbreviation \(\mu(\dint s) = \cosh^k(s) \sinh^{d-k-1}(s) \, \dint s\).
For a fixed \(s\geq 0\) it has already been established in \cite[Lem.~4.1]{BHT} that \(g_r(s) \to g(s)\) as \(r \to \infty\).
Later we will quantify the convergence.

To transfer closeness of the characteristic functions to closeness of the distribution functions,
we use the following inequality of Berry--Esseen type.

\begin{theorem}
    \label{thm:berry_esseen}
        Let \(F, G\) be distribution functions with  characteristic functions \(\varphi_F, \varphi_G\) respectively.
        If \(G\) has bounded density \(g \leq M\), then
        \[ \abs{F(x) - G(x)} \leq \frac{1}{\pi} \int_{-T}^T \frac{\abs{\varphi_F(t) - \varphi_G(t)}}{\abs{t}} \, \dint t + \frac{24 M}{\pi T} \]
        for  \(x \in \R\) and \(T > 0\).
\end{theorem}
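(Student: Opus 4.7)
The plan is to prove this as the classical Esseen smoothing inequality. Writing $H := F - G$, which is a function of bounded variation vanishing at $\pm\infty$, the strategy has two parts: apply Fourier inversion to bound a smoothed version of $H$, then transfer this bound back to $H$ itself using the boundedness of the density of $G$.

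For the smoothing I would use the Fejér-type density $v_T(x) := (1 - \cos(Tx))/(\pi T x^2)$ on $\R$, whose characteristic function $k_T(t) = \max\{0, 1 - |t|/T\}$ is supported in $[-T, T]$, and set $H_T := H * v_T$. Exploiting the identity $\int_\R e^{itx}\,\dint H(x) = \varphi_F(t) - \varphi_G(t)$ and applying Fourier inversion to the signed measure $\dint H * v_T$, whose characteristic function has compact support and is absolutely integrable, yields the representation
\[
    H_T(x) = -\frac{1}{2\pi}\int_{-T}^T e^{-itx}\,\frac{\varphi_F(t) - \varphi_G(t)}{it}\, k_T(t)\,\dint t,
\]
so the bound $|k_T| \leq 1$ produces
\[
    \sup_{x \in \R} |H_T(x)| \leq \frac{1}{2\pi}\int_{-T}^T \frac{|\varphi_F(t) - \varphi_G(t)|}{|t|}\,\dint t.
\]

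In the second step one transfers this smoothed bound back to $H$. Set $\eta := \sup_x (F(x) - G(x))$, which we may assume positive (the negative case is handled by applying the same argument to $G - F$). Pick $x_0$ with $H(x_0) > \eta - \varepsilon$. Because $F$ is non-decreasing and $G$ has density $\leq M$, the one-sided pointwise bound $H(x_0 + y) \geq \eta - \varepsilon - My$ holds for every $y \geq 0$. Plugging this into $H_T(x_0 + h) = \int_\R H(x_0 + y) v_T(h - y)\,\dint y$ for an appropriately chosen shift $h > 0$, estimating $H$ on $\{y < 0\}$ by the trivial bound $|H| \leq \eta$, and controlling the kernel tail via $\int_{|z| > h} v_T(z)\,\dint z \leq 4/(\pi T h)$, Esseen's classical argument leads, after an optimization over $h$, to the key inequality
\[
    \eta \leq 2 \sup_{x \in \R} |H_T(x)| + \frac{24 M}{\pi T}.
\]
Combining this with the Fourier bound yields the claim, the factor $2$ precisely absorbing the $1/(2\pi)$ prefactor into the desired $1/\pi$.

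The main obstacle is this transfer step. The factor $2$ arises because only one-sided control on the values of $H$ near its extremum is available: on $\{y < 0\}$ no Lipschitz-type bound is at hand, so the kernel mass on that side must be paid for with the trivial bound $|H| \leq \eta$, effectively losing half of the kernel. The explicit constant $24$ then emerges from balancing the linear error $Mh$ against the inverse kernel tail $\eta/(Th)$ in the optimization over $h$. Everything else is bookkeeping, and the argument does not use the specific form of $F$ or $G$ beyond the stated hypotheses.
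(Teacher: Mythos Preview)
The paper does not prove this statement itself; immediately after the theorem it notes that the inequality is due to Esseen and refers the reader to Esseen's original paper, to Lo\`eve (p.~285), and to Durrett (Sec.~3.4.4) for a proof. Your sketch is exactly the classical Esseen smoothing argument found in those references (Fej\'er-type kernel, Fourier inversion for the smoothed difference, then the one-sided Lipschitz transfer step yielding the factor~$2$ and the constant~$24$), so it is correct and coincides with the proof the paper is implicitly invoking.
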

This inequality is originally due to Esseen, it appears as Theorem II.2.a in \cite{Esseen1945FourierAO}, albeit in a slightly different form .
The version presented above can be found in \cite[p.~285]{loeve}.
A less general version of the result is also contained in \cite[Sec.~3.4.4]{Durrett}.

\subsection{Bounds on \texorpdfstring{\(g\) and \(g_r\)}{g and g\_r}}

To prepare the proof of \Cref{thm:convergence_rates_1}, we provide further auxiliary results. We start by rewriting the function $g_r$.

\begin{lemma}
    If \(r,s \geq 0\), then
    \begin{equation}
        \label{eq:g_identity_1}
        g_r(s) = \one\{s\leq r\}\omega_k e^{-(k-1)r} \cosh^{-(k-1)}(s) \int_s^r (\sinh^2(u) - \sinh^2(s))^\frac{k-2}{2} \sinh(u)\, \dint u.
    \end{equation}
\end{lemma}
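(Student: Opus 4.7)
The plan is to establish the identity by a direct change of variables in the integral defining $g_r(s)$. The indicator $\one\{s\le r\}$ is untouched, so I focus on the integrand, assuming $s\le r$.

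Concretely, starting from
\[ g_r(s)=\one\{s\le r\}\,\omega_k\,e^{-(k-1)r}\int_0^{\arcosh(\cosh r/\cosh s)}\sinh^{k-1}(u)\,\mathrm{d}u, \]
I would introduce the new variable $v=\arcosh(\cosh u\cosh s)$. The motivation is that this is precisely the substitution that makes the bounds clean: at $u=0$ we get $v=s$, and at $u=\arcosh(\cosh r/\cosh s)$ we get $\cosh v=\cosh u\cosh s=\cosh r$, i.e.\ $v=r$. This is also the natural substitution geometrically, since $\cosh v=\cosh u\cosh s$ is the hyperbolic Pythagorean relation.

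Differentiating $\cosh v=\cosh u\cosh s$ in $u$ gives $\sinh v\,\mathrm{d}v=\sinh u\cosh s\,\mathrm{d}u$, hence
\[ \mathrm{d}u=\frac{\sinh v}{\sinh u\,\cosh s}\,\mathrm{d}v. \]
Combining this with $\sinh^{k-1}(u)\,\mathrm{d}u=\sinh^{k-2}(u)\cdot\sinh u\,\mathrm{d}u$ turns the integrand into $\sinh^{k-2}(u)\sinh v/\cosh s\,\mathrm{d}v$. Using the identity
\[ \sinh^2 u=\cosh^2 u-1=\frac{\cosh^2 v-\cosh^2 s}{\cosh^2 s}=\frac{\sinh^2 v-\sinh^2 s}{\cosh^2 s}, \]
I rewrite $\sinh^{k-2}(u)=(\sinh^2 v-\sinh^2 s)^{(k-2)/2}\cosh^{-(k-2)}(s)$. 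Collecting factors of $\cosh s$ then yields
\[ \sinh^{k-1}(u)\,\mathrm{d}u=\cosh^{-(k-1)}(s)\,(\sinh^2 v-\sinh^2 s)^{(k-2)/2}\sinh v\,\mathrm{d}v, \]
so that substituting back into the definition of $g_r(s)$ produces exactly the right-hand side of \eqref{eq:g_identity_1}.

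No step looks genuinely hard; the only subtlety is picking the right substitution. The map $v=\arcosh(\cosh u\cosh s)$ is monotone increasing on $u\ge 0$ (with $s\ge 0$ fixed), so it is a bona fide diffeomorphism between $[0,\arcosh(\cosh r/\cosh s)]$ and $[s,r]$, and the identity $\sinh^2 v-\sinh^2 s=\cosh^2 v-\cosh^2 s\ge 0$ on this range guarantees that the fractional power $(\sinh^2 v-\sinh^2 s)^{(k-2)/2}$ is real-valued and well-defined. For $k=2$ the factor equals $1$ and one verifies directly; for $k\ge 3$ it is strictly positive on $(s,r]$, so no singularity issues arise.
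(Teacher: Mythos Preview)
Your proof is correct and takes a slightly different route from the paper. The paper differentiates the left-hand side of \eqref{eq:g_identity_1} with respect to $r$ via the chain rule, obtains
\[
\frac{\partial}{\partial r}\int_0^{\arcosh(\cosh r/\cosh s)}\sinh^{k-1}(u)\,\dint u
=\frac{(\sinh^2 r-\sinh^2 s)^{(k-2)/2}}{\cosh^{k-1}(s)}\sinh r,
\]
and then integrates back from $s$ to $r$ using that both sides vanish at $r=s$. You instead perform the substitution $v=\arcosh(\cosh u\cosh s)$ directly inside the integral. The two arguments are of course closely linked---your Jacobian computation is essentially the paper's chain-rule computation read in the $u$-variable rather than the $r$-variable---but your version is arguably the more transparent one, since it produces the full integrand in one stroke rather than recovering it from its $r$-derivative. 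One small remark: strictly speaking the map $u\mapsto v$ is not a diffeomorphism on the \emph{closed} interval when $s>0$, since $\frac{dv}{du}\big|_{u=0}=0$; but it is $C^1$ and monotone there, which is all the substitution formula requires, so this does not affect your argument.
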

\begin{proof}
    Note that the equality is clear for \(r \leq s\).
    On \(\{(r,s) \colon 0 \le s < r\}\) application of the chain rule yields
    \[\begin{aligned}
        &\frac{\partial}{\partial r} \int_0^{\arcosh\left(\frac{\cosh(r)}{\cosh(s)}\right)} \sinh^{k-1}(u) \,\dint u\\
            &\qquad= \sinh\left(\arcosh\left(\frac{\cosh(r)}{\cosh(s)}\right)\right)^{k-1} \frac{\partial}{\partial r} \arcosh\left(\frac{\cosh(r)}{\cosh(s)}\right)\\
            &\qquad= \left(\frac{\cosh^2(r)}{\cosh^2(s)} - 1\right)^\frac{k-1}{2} \left(\frac{\cosh^2(r)}{\cosh^2(s)} - 1\right)^{-\frac{1}{2}} \frac{\sinh(r)}{\cosh(s)}\\
            &\qquad= \frac{(\cosh^2(r) - \cosh^2(s))^\frac{k-2}{2}}{\cosh^{k-1}(s)} \sinh(r)\\
            &\qquad= \frac{(\sinh^2(r) - \sinh^2(s))^\frac{k-2}{2}}{\cosh^{k-1}(s)} \sinh(r),
    \end{aligned}\]
    where we have used the identity \(\cosh^2 - \sinh^2 = 1\) repreatedly.
    Hence
    \[\int_0^{\arcosh\left(\frac{\cosh(r)}{\cosh(s)}\right)} \sinh^{k-1}(u)\, \dint u - 0
        =\int_s^r  \frac{(\sinh^2(t) - \sinh^2(s))^\frac{k-2}{2}}{\cosh^{k-1}(s)} \sinh(t) \, \dint t,\]
    which implies the assertion of the lemma.
\end{proof}

Next we show that $g_r(s)$ approaches $g(s)$  from below, for fixed $s\ge 0$, and derive a bound for the speed of convergence as $r\to\infty$.

\begin{lemma}
    If \(r, s \geq 0\), then
    \begin{equation}
    \label{eq:g_bound_1}
        g_r(s) \leq g(s)
    \end{equation}
    and
    \begin{equation}
    \label{eq:g_bound_2}
        g(s) - g_r(s) \leq \frac{\omega_k}{k-1} e^{-(k-1)r} +\one\{s\leq r\} \omega_k (k-1) e^{-(k-1)r-(k-3)s} \int_s^r e^{(k-3)u} \,\dint u.
    \end{equation}
\end{lemma}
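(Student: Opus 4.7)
I would handle the two bounds separately, both starting from the representation \eqref{eq:g_identity_1}.

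For \eqref{eq:g_bound_1}, the case $s>r$ is trivial since $g_r(s)=0$. For $s\le r$, canceling $\omega_k\cosh^{-(k-1)}(s)$ on both sides reduces the claim to
\[
\int_s^r(\sinh^2 u-\sinh^2 s)^{(k-2)/2}\sinh(u)\,\dint u\le\frac{e^{(k-1)r}}{(k-1)2^{k-1}},
\]
and bounding the integrand from above by $\sinh^{k-1}(u)\le(e^u/2)^{k-1}$ and integrating yields the result.

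For \eqref{eq:g_bound_2}, the case $s>r$ follows immediately from $\cosh(s)\ge\cosh(r)\ge e^r/2$. For $s\le r$ the plan is to use the identity
\[
\frac{1}{(k-1)2^{k-1}}=e^{-(k-1)r}\int_0^r\frac{e^{(k-1)u}}{2^{k-1}}\,\dint u+\frac{e^{-(k-1)r}}{(k-1)2^{k-1}},
\]
split $\int_0^r=\int_0^s+\int_s^r$, and combine the pieces outside $[s,r]$ to arrive at
\[
g(s)-g_r(s)=\omega_k\cosh^{-(k-1)}(s)e^{-(k-1)r}\Big[\frac{e^{(k-1)s}}{(k-1)2^{k-1}}+\int_s^r D(u,s)\,\dint u\Big],
\]
where $D(u,s)\coloneqq\frac{e^{(k-1)u}}{2^{k-1}}-(\sinh^2 u-\sinh^2 s)^{(k-2)/2}\sinh(u)\ge 0$. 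Using $\cosh^{-(k-1)}(s)\le 2^{k-1}e^{-(k-1)s}$, the first bracketed summand contributes exactly the first target term $\omega_k e^{-(k-1)r}/(k-1)$.

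The main obstacle is to establish a pointwise upper bound of the form $D(u,s)\le(k-1)\cosh^{k-1}(s)\,e^{(k-3)(u-s)}$, since integrating this against the prefactor reproduces the second target term. To this end, I would factor
\[
(\sinh^2 u-\sinh^2 s)^{(k-2)/2}\sinh(u)=\frac{e^{(k-1)u}}{2^{k-1}}\bigl((1-a)(1-b)\bigr)^{(k-2)/2}(1-c),
\]
with $a=e^{2(s-u)}$, $b=e^{-2(s+u)}$, $c=e^{-2u}\in[0,1]$, and then apply $1-xy\le(1-x)+(1-y)$ together with $1-(1-t)^{(k-2)/2}\le C_k\,t$, where $C_k\coloneqq\max\{1,(k-2)/2\}$. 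This yields $D(u,s)\le\frac{e^{(k-3)u}}{2^{k-1}}\bigl[2C_k\cosh(2s)+1\bigr]$, which via $\cosh(2s)=2\cosh^2(s)-1$ and $\cosh(s)\ge e^s/2$ is absorbed into $(k-1)\cosh^{k-1}(s)\,e^{(k-3)(u-s)}$. The delicate subcase is $k=3$, where $(k-2)/2=1/2<1$ and the standard convexity inequality $1-(1-t)^\beta\le\beta t$ fails; there one substitutes the concavity identity $1-\sqrt{1-t}=t/(1+\sqrt{1-t})\le t$, which is still captured by $C_k=\max\{1,(k-2)/2\}$, so the remainder of the argument proceeds without modification.
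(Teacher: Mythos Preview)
Your proof is correct and structurally identical to the paper's: both proofs dispose of \eqref{eq:g_bound_1} via the crude bound $(\sinh^2u-\sinh^2s)^{(k-2)/2}\sinh u\le(e^u/2)^{k-1}$, handle $s>r$ in \eqref{eq:g_bound_2} by $g(s)\le g(r)$, and for $s\le r$ reduce to bounding the integrand difference $D(u,s)=\tfrac{e^{(k-1)u}}{2^{k-1}}-(\sinh^2u-\sinh^2s)^{(k-2)/2}\sinh u$.

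The one genuine difference is in how $D(u,s)$ is estimated. The paper first replaces $\sinh u$ by the smaller quantity $(\sinh^2u-\sinh^2s)^{1/2}$, writes the result as $y^{(k-1)/2}-x^{(k-1)/2}$ with $y=e^{2u}$ and $x=4(\sinh^2u-\sinh^2s)$, and applies the mean value inequality $y^{(k-1)/2}-x^{(k-1)/2}\le\tfrac{k-1}{2}(y-x)y^{(k-3)/2}$; since $y-x=e^{2s}+e^{-2s}-e^{-2u}\le 2e^{2s}$, this gives $2^{k-1}D(u,s)\le(k-1)e^{2s}e^{(k-3)u}$ in one stroke, uniformly in $k\ge 3$. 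Your route via the factorization $\sinh^2u-\sinh^2s=\sinh(u+s)\sinh(u-s)$ and the inequalities $1-xy\le(1-x)+(1-y)$, $1-(1-t)^\beta\le\max\{1,\beta\}t$ reaches an equivalent bound but requires the separate treatment of $k=3$ that you flag. The paper's mean value argument is slightly slicker precisely because $(k-1)/2\ge 1$ for all admissible $k$, so no case split is needed; your factorization, on the other hand, makes the origin of the $e^{(k-3)u}$ factor a bit more transparent.
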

\begin{proof}
    Since the first inequality is clear for \(s > r\), assume that \(s \leq r\).
    From the identity \eqref{eq:g_identity_1} and the bound
    \[ (\sinh^2(u) - \sinh^2(s))^\frac{k-2}{2} \sinh(u) \leq \sinh(u)^{k-1} \leq 2^{-(k-1)}e^{(k-1)u} \]
    for \(0 \leq s \leq u\), we see that
    \[\begin{aligned}
        g_r(s)
            &\leq \omega_k e^{-(k-1)r} \cosh^{-(k-1)}(s) \int_s^r 2^{-(k-1)} e^{(k-1) u} \,\dint u\\
            &=\frac{\omega_k}{2^{k-1}} e^{-(k-1)r} \cosh^{-(k-1)}(s) \frac{1}{k-1} (e^{(k-1)r} - e^{(k-1)s})\\
            &\leq\frac{\omega_k}{2^{k-1}(k-1)} \cosh^{-(k-1)}(s)\\
            &=g(s),
    \end{aligned}\]
    which proves \eqref{eq:g_bound_1}.

    For \(s>r\), the second inequality holds since
    \[ g(s) - g_r(s) = g(s) \leq g(r) = \frac{\omega_k}{k-1} (2\cosh(r))^{-(k-1)} \leq \frac{\omega_k}{k-1} e^{-(k-1)r}, \]
    where we used that \(2\cosh(r)\geq e^r\).
    Assume now that \(s\leq r\).
    Using relation \eqref{eq:g_identity_1}, $g(s)-g_r(s)\ge 0$ (see \eqref{eq:g_bound_1}) and \(2\cosh(s)\geq e^s\), we obtain
    \begin{align}
        &g(s) - g_r(s)\nonumber\\
        &= \frac{\omega_k}{(2\cosh(s))^{k-1}} e^{-(k-1)r}  
                \left( \frac{e^{(k-1)r}}{k-1} - \int_s^r 2^{k-1}(\sinh^2(u) - \sinh^2(s))^\frac{k-2}{2} \sinh(u)\,\dint u \right)\nonumber\\
        &\le  \omega_k e^{-(k-1)(r+s)} 
                \left( \frac{e^{(k-1)s}}{k-1} +  \int_s^r e^{(k-1)u} - 2^{k-1}(\sinh^2(u) - \sinh^2(s))^\frac{k-2}{2} \sinh(u)\,\dint u \right)\nonumber\\
        &\le  \omega_k e^{-(k-1)(r+s)} 
                \left( \frac{e^{(k-1)s}}{k-1} +  \int_s^r (e^{2u})^{\frac{k-1}{2}} - (4 \sinh^2(u) - 4 \sinh^2(s))^\frac{k-1}{2} \,\dint u \right),     \label{eq:integrand}   
    \end{align}
    Recall that $k\ge 3$.  The mean value theorem yields
    \[ y^\frac{k-1}{2} - x^\frac{k-1}{2} \leq (y - x) \frac{k-1}{2} y^\frac{k-3}{2} \]
    for \(0\leq x \leq y\).
    Applying this with \(y = e^{2u}\) and \(x = 4 \sinh^2(u) - 4 \sinh^2(s)\) then allows us to bound the integrand in \eqref{eq:integrand} from above by
    \[\begin{aligned}
        (e^{2u} - (4 \sinh^2(u) - 4 \sinh^2(s))) \frac{k-1}{2} (e^{2u})^\frac{k-3}{2}
            &= (e^{2s} + e^{-2s} - e^{-2u}) \frac{k-1}{2} e^{(k-3)u}\\
            &\leq (k-1) e^{2s} e^{(k-3)u}.
    \end{aligned}\]
    Plugging this bound into \eqref{eq:integrand}, we arrive at
    \[\begin{aligned}
        g(s) - g_r(s)
                &\leq  \omega_k  e^{-(k-1)(r+s)}   \left( \frac{e^{(k-1)s}}{k-1} +  \int_s^r (k-1) e^{2s} e^{(k-3)u} \, \dint u \right)\\
                &= \frac{\omega_k}{k-1} e^{-(k-1)r} + \omega_k (k-1) e^{-(k-1)r-(k-3)s} \int_s^r e^{(k-3)u} \,\dint u,
    \end{aligned}\]
    which concludes the proof.
\end{proof}

\begin{lemma}
    \label{lem:bound_int}
        There exists some constant \(C_{d,k} > 0\) such that, for any
        \(s_0,r \geq 0\),
        \begin{equation*}
            \int_0^{s_0} \abs{g(s) - g_r(s)} \, \mu(\dint s) \leq C_{d,k} ( e^{-(k-1)r + (d-1)s_0} + e^{-2r + (d-k+2)s_0} )
        \end{equation*}
        when \(k \geq 4\) and
        \begin{equation*}
            \int_0^{s_0} \abs{g(s) - g_r(s)} \, \mu(\dint s) \leq C_{d,k} ( e^{-(k-1)r + (d-1)s_0} + r e^{-2r + (d-k+2)s_0} )
        \end{equation*}
        in the case \(k=3\) (and thus \(d=4\)), as well as
        \begin{equation*}
            \int_{s_0}^\infty \abs{g_r(s)}^2 + \abs{g(s)}^2 \, \mu(\dint s) \leq C_{d,k} e^{-(2k-d-1)s_0},
        \end{equation*}
        for any \(k, d\).
    \end{lemma}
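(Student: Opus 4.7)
The plan is to combine the pointwise bounds on $g$ and on $g - g_r$ provided by the preceding two lemmas with the crude estimate $\cosh^k(s)\sinh^{d-k-1}(s) \lesssim e^{(d-1)s}$ on the density of the measure $\mu$ for $s \geq 0$, then evaluate one-dimensional integrals.

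First I would dispose of the tail bound. From \eqref{eq:g_bound_1} we have $0 \leq g_r \leq g$, while the definition of $g$ together with $2\cosh(s) \geq e^s$ yields $g(s) \leq \tfrac{\omega_k}{k-1} e^{-(k-1)s}$. Squaring and integrating against $\mu$ gives
\[
\int_{s_0}^\infty (g_r(s)^2 + g(s)^2)\,\mu(\dint s) \lesssim \int_{s_0}^\infty e^{-2(k-1)s+(d-1)s}\,\dint s = \int_{s_0}^\infty e^{-(2k-d-1)s}\,\dint s,
\]
which is $\asymp e^{-(2k-d-1)s_0}$ because $2k-d-1 > 0$ by hypothesis.

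For the main bound on $\int_0^{s_0}|g - g_r|\,\mu(\dint s)$, I would use $|g-g_r| = g-g_r$ (from \eqref{eq:g_bound_1}) and insert the two-term estimate \eqref{eq:g_bound_2}. The first summand $\tfrac{\omega_k}{k-1} e^{-(k-1)r}$ contributes $\lesssim e^{-(k-1)r}\int_0^{s_0} e^{(d-1)s}\,\dint s \lesssim e^{-(k-1)r + (d-1)s_0}$, matching the first piece of the claimed bound in both cases. The second summand requires evaluating the inner integral $\int_s^r e^{(k-3)u}\,\dint u$. For $k \geq 4$ this integral is at most $\tfrac{1}{k-3}\,e^{(k-3)r}$, and combining with the prefactor $\omega_k(k-1)e^{-(k-1)r-(k-3)s}$ collapses to a pointwise bound of order $e^{-2r-(k-3)s}$. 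Integrating against $\mu(\dint s) \lesssim e^{(d-1)s}\,\dint s$ then yields
\[
\lesssim e^{-2r}\int_0^{s_0} e^{(d-k+2)s}\,\dint s \lesssim e^{-2r + (d-k+2)s_0},
\]
where $d-k+2 > 0$ since $k \leq d-1$.

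The only case that requires separate bookkeeping is $k=3$: there the integrand $e^{(k-3)u}=1$ is constant, so the inner integral becomes $r-s \leq r$ rather than exponential, and this is precisely what produces the extra linear factor of $r$ in the second bound (still with exponent $d-k+2 = d-1$ in the remaining integral). Apart from branching on this boundary case, the argument is a routine exercise in tracking exponential factors; there is no real conceptual obstacle, only the bookkeeping of which exponentials combine to give the claimed rates.
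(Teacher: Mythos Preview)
Your proposal is correct and follows essentially the same argument as the paper: both use $g_r \le g$ together with \eqref{eq:g_bound_2} and the crude density bound $\mu(\dint s)\le e^{(d-1)s}\,\dint s$, then split into the cases $k\ge 4$ (where $\int_s^r e^{(k-3)u}\,\dint u\lesssim e^{(k-3)r}$) and $k=3$ (where the inner integral contributes the factor $r$); the tail bound is handled identically via $g_r^2\le g^2\lesssim e^{-2(k-1)s}$. The only cosmetic difference is that the paper packages the case distinction into an auxiliary function $h_k(r)$.
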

    \begin{proof}
       For \(r \geq 0\) we define \(h_3(r): = r\) and \(h_k(r) := 1\) when \(k \geq 4\). Then
        \[ \int_s^r e^{(k-3)u}\, \dint u\leq h_k(r) e^{(k-3)r} \]
        for all \(k\geq 3\) and \(s,r\geq 0\).
        Combining this with the fact that \(g(s) - g_r(s)\ge 0\)  (see \eqref{eq:g_bound_1}), relation \eqref{eq:g_bound_2}
        and the bound \(\mu(\dint s) \leq e^{(d-1)s}\,\dint s\), we get
        \[\begin{aligned}
            &\int_0^{s_0} \abs{g(s) - g_r(s)} \,\mu(\dint s)\\
                &\qquad\leq \int_0^{s_0} \frac{\omega_k}{k-1} e^{-(k-1)r} + \omega_k (k-1) e^{-2r-(k-3)s} h_k(r)\, \mu(\dint s)\\
                &\qquad= \frac{\omega_k}{k-1} e^{-(k-1)r} \int_0^{s_0} \, \mu(\dint s) + \omega_k (k-1) e^{-2r} h_k(r) \int_0^{s_0} e^{-(k-3)s} \,  \mu(\dint s)\\
                &\qquad\leq \frac{\omega_k}{k-1} e^{-(k-1)r} \frac{1}{d-1} e^{(d-1)s_0} + \omega_k (k-1) e^{-2r} h_k(r) \frac{1}{d-k+2} e^{(d-k+2)s_0},
        \end{aligned}\]
        from which the first two claims follow immediately.

        For the last bound, simply observe that  \eqref{eq:g_bound_1} implies \(\abs{g_r(s)}^2 \leq \abs{g(s)}^2\) and thus
        \[\begin{aligned}
            \int_{s_0}^\infty \abs{g_r(s)}^2 + \abs{g(s)}^2 \,\mu(\dint s)
                &\leq 2 \int_{s_0}^\infty \left(\frac{\omega_k}{k-1} (2\cosh(s))^{-(k-1)}\right)^2\, \mu(\dint s)\\
                &\leq2\int_{s_0}^\infty \left(\frac{\omega_k}{k-1} e^{-(k-1)s}\right)^2 e^{(d-1)s} \, \dint s\\
                &= \frac{2\omega_k^2}{(k-1)^2(2k-d-1)} e^{-(2k-d-1)s_0},
        \end{aligned}\]
        where we used that \(2 \cosh(r) \geq e^r\) and \(\mu(\dint s) \leq e^{(d-1)s}\dint s\) in the second line.
    \end{proof}

\subsection{Proof of Theorem \ref{thm:convergence_rates_1}}

In order to apply \Cref{thm:berry_esseen} to the random variables \(Y_r\) and \(Y\), we first need to establish that \(Y\) has bounded density.

\begin{lemma}
\label{lem:bounded_density}
    The distribution of \(Y\) has bounded density.
\end{lemma}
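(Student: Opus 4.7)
The plan is to prove integrability of the characteristic function $\varphi$ of $Y$ on $\R$; the Fourier inversion theorem then forces $Y$ to have a continuous density bounded by $\frac{1}{2\pi}\int_\R|\varphi(t)|\,\dint t$. A direct computation with the formula for $\varphi$ given just before the lemma yields
\[
|\varphi(t)|=\exp(-\omega_{d-k}\,J(t)),\qquad J(t)\coloneqq\int_0^\infty(1-\cos(tg(s)))\,\mu(\dint s),
\]
so everything reduces to establishing a lower bound $J(t)\gtrsim |t|^\alpha$ for some $\alpha>0$ and all sufficiently large $|t|$.

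My approach is to push $\mu$ forward along the strictly decreasing bijection $g\colon[0,\infty)\to(0,g(0)]$ and to analyse the resulting measure $g_\sharp\mu$ near the origin. Since $g(s)=c_0\cosh^{-(k-1)}(s)$ with $c_0=\omega_k/((k-1)2^{k-1})$, the elementary asymptotics $g(s)\sim(\omega_k/(k-1))e^{-(k-1)s}$ and $\mu(\dint s)\sim 2^{-(d-1)}e^{(d-1)s}\,\dint s$ as $s\to\infty$ combine via the change of variables $u=g(s)$ to give $g_\sharp\mu(\dint u)\asymp u^{-\alpha-1}\,\dint u$ on some fixed interval $(0,u_0]$, where
\[
\alpha\coloneqq\frac{d-1}{k-1}.
\]
The standing hypotheses $d\geq 2$ and $2k>d+1$ force $\alpha\in(0,2)$, which is precisely the range we will need.

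Translating $J(t)$ to the $u$-variable, restricting to $(0,u_0]$ and substituting $v=tu$ then gives, for $t>0$ large,
\[
J(t)\gtrsim\int_0^{u_0}(1-\cos(tu))\,u^{-\alpha-1}\,\dint u=t^\alpha\int_0^{tu_0}(1-\cos v)\,v^{-\alpha-1}\,\dint v;
\]
since $\alpha\in(0,2)$, the last integral tends to the strictly positive constant $\int_0^\infty(1-\cos v)v^{-\alpha-1}\,\dint v$ as $t\to\infty$. Hence $J(t)\gtrsim|t|^\alpha$ for large $|t|$ (the case $t<0$ being symmetric by evenness of $\cos$), so $|\varphi(t)|$ is dominated by $\exp(-c|t|^\alpha)$ outside a compact set and therefore lies in $L^1(\R)$. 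The main technical obstacle will be upgrading the asymptotic equivalence $g_\sharp\mu(\dint u)/\dint u\sim c_1 u^{-\alpha-1}$ at $u\downarrow 0$ to a genuine two-sided comparability on the entire fixed interval $(0,u_0]$; this is a routine exercise in the asymptotics of $\cosh$ and $\sinh$ but has to be done carefully enough for the lower bound on $J(t)$ to hold uniformly in the required range.
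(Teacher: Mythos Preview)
Your argument is correct and follows essentially the same route as the paper: both show that $|\varphi(t)|$ decays exponentially by bounding $\int(1-\cos(ty))\,\nu(\dint y)$ from below via the behaviour of the L\'evy measure near the origin (the paper attributes this device to Orey). The only differences are cosmetic. The paper quotes the explicit L\'evy density $\rho(y)\asymp y^{-(d+k-2)/(k-1)}=y^{-\alpha-1}$ from \cite[Rem.~1.5]{BHT}, truncates the integral to $(0,1/|t|)$, applies $1-\cos x\ge x^2/4$, and is content with the linear bound $-\log|\varphi|\gtrsim|t|$; you instead recompute the asymptotics of $g_\sharp\mu$ directly and carry out the full substitution $v=tu$, which yields the sharper (but for present purposes unnecessary) exponent $|t|^{\alpha}$ with $\alpha=(d-1)/(k-1)\in(1,2)$. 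Your pushforward $\omega_{d-k}\,g_\sharp\mu$ is precisely the L\'evy measure of $Y$, so the two computations are in fact the same object viewed from two ends.
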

\begin{proof}
    Note that \(Y\) is a multiple of \(Z\), so it suffices to show that \(Z\) has a bounded density.
    The argument we use appears in \cite{orey}, we include it here since it does not take up a lot of lines and might offer the reader some insight.
    In \cite[Rem.~1.5]{BHT}, it is established that \(Z\) is infinitely divisible without Gaussian component and further that its Lévy measure \(\nu\) is concentrated on \((0,1)\) with Lebesgue density
    \[ \rho(y) = \frac{\omega_{d-k}}{k-1} y^{-\frac{d+k-2}{k-1}} \left(1-y^\frac{2}{k-1}\right)^{\frac{d-k}{2}-1},\qquad y \in (0,1).  \]
    Hence there is a constant \(c>0\) such that
    \begin{equation}\label{lbound}
         \int_0^{s} y^2 \nu(\dint y) \geq c\, s^{\frac{2k-d-1}{k-1}} \geq c\, s 
    \end{equation}
    for all \(s \in (0,1/2)\).

    By the Lévy--Khintchine formula, the characteristic function of \(Z\) satisfies
    \[ -\log|\varphi_Z(t)| = \int_0^1 (1-\cos(ty))\, \nu(\dint y),\qquad t \in \R. \]
    Using \eqref{lbound} and the inequality \(1-\cos(x) \geq x^2/4\), \(x \in [0,1]\), we arrive at
    \begin{align*}
        -\log|\varphi_Z(t)| \geq \int_0^\frac{1}{|t|} (1-\cos(ty)) \, \nu(\dint y)
        \geq \frac{1}{4}t^2 \int_0^\frac{1}{|t|} y^2 \,\nu(\dint y) \geq \frac{c}{4} |t|,
    \end{align*}
    for \(|t| \geq 2\).
    Hence \( |\varphi_Z(t)| \leq \exp(-\frac{c}{4}|t|) \) holds for \(t\geq 2\) and by the inversion formula (see, e.g., \cite[Thm.~3.3.14]{Durrett}) \(Z\) admits a bounded probability density.
\end{proof}

We can now apply \Cref{thm:berry_esseen} and obtain that for any \(r > 0\) and \(T > 0\) it holds that
\begin{equation}
\label{eq:bound_kol_1}
    d_K(Y_r, Y) \leq \frac{1}{\pi} \int_{-T}^T \frac{\abs{\varphi_r(t) - \varphi(t)}}{\abs{t}}\, \dint t + \frac{24 M}{\pi T},
\end{equation}
where \(M > 0\) is a constant that depends only on \(d,k\).

To proceed, we derive bounds for the first term on the right-hand side of relation \eqref{eq:bound_kol_1}.
First, we define \(q\colon \R \to \C\), \(x \mapsto e^{i x} - 1 - ix\), which allows us to express the characteristic functions as
\[  \varphi_r(t) = \exp\left( \omega_{d-k} \int_0^\infty q(t g_r(s)) \,\mu(\dint s) \right),\qquad
    \varphi(t) = \exp\left( \omega_{d-k} \int_0^\infty q(t g(s)) \,\mu(\dint s) \right),\]
for \(r > 0\).
Note that \(\Real(q(x)) \leq 0\) for all \(x\in\R\).
For complex numbers \(z,w\) with real part \(\leq 0\), it holds that
\begin{equation}
\label{eq:bound_exp}
    \abs{e^z - e^w} \leq \abs{z-w}.
\end{equation}
(To see this, let \(\gamma \colon [0,1] \to \C, t\mapsto \exp(w+t(z-w))\).
Note that \(\Real(w+t(z-w)) \leq 0\) for all \(t\in[0,1]\), hence the fundamental theorem of calculus and the triangle inequality give
\(\abs{e^z-e^w} \leq \int_0^1 \abs{\gamma'(t)} \dint t = \int_0^1 \abs{(z-w)\exp(w+t(z-w))} \dint t \leq \abs{z-w} \).)
Applying this to the characteristic functions and using the triangle inequality, we obtain
\begin{equation}
\label{eq:bound_CF_1}
    \abs{\varphi_r(t) - \varphi(t)} \leq \omega_{d-k} \int_0^\infty \abs{q(t g_r(s)) - q(t g(s))}\, \mu(\dint s),
\end{equation}
for \(r > 0\).

\begin{lemma}
\label{lem:bound_q}
    If \(x,y\in\R\), then \(\abs{q(y) - q(x)} \leq 2 \abs{y - x}\) and \(\abs{q(x)} \leq \frac{1}{2} \abs{x}^2\).
\end{lemma}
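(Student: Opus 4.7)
The plan is to treat the two inequalities separately, in both cases using the differentiability of $q$ together with an integral representation along the segment from $0$ (respectively $x$) to the other endpoint.

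For the first inequality, I would compute $q'(u) = ie^{iu} - i = i(e^{iu}-1)$ and note the elementary estimate $|e^{iu}-1| = 2|\sin(u/2)| \le 2$ valid for all real $u$. Then the fundamental theorem of calculus applied to $t \mapsto q(x + t(y-x))$ gives
\begin{equation*}
    q(y) - q(x) = (y-x) \int_0^1 q'(x + t(y-x)) \, \dint t,
\end{equation*}
and taking absolute values yields $|q(y) - q(x)| \le |y-x| \cdot \sup_u |q'(u)| \le 2|y-x|$.

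For the second inequality, I would use that $q(0) = 0$, $q'(0) = i - i = 0$, and $q''(u) = -e^{iu}$, so that $|q''(u)| = 1$ for every $u \in \R$. Applying Taylor's theorem with integral remainder (or iterating the fundamental theorem of calculus on real and imaginary parts separately) gives
\begin{equation*}
    q(x) = \int_0^x (x-u) q''(u) \, \dint u,
\end{equation*}
so that
\begin{equation*}
    |q(x)| \le \left| \int_0^x (x-u) \,\dint u \right| \sup_{u} |q''(u)| = \frac{1}{2} |x|^2.
\end{equation*}

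There is no real obstacle here: both bounds are textbook estimates on $q(x) = e^{ix} - 1 - ix$ and follow immediately once one observes $|q'| \le 2$ and $|q''| \le 1$. The only small point to be careful about is handling complex-valued integrands correctly (by integrating real and imaginary parts), but this is routine.
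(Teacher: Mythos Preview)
Your proof is correct; both inequalities are standard and your derivative/Taylor-remainder arguments are clean. The paper's proof differs only cosmetically: for the first bound it splits $q(y)-q(x)=(e^{iy}-e^{ix})-i(y-x)$ and invokes the earlier estimate \eqref{eq:bound_exp}, and for the second it simply cites \cite[Lem.~6.15]{kallenberg}, whereas you supply the short direct argument.
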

\begin{proof}
    From \eqref{eq:bound_exp} we get
    \[ \abs{q(y) - q(x)} \leq \abs{ e^{iy} - e^{ix}} + \abs{iy - ix} \leq 2 \abs{y-x}, \]
    which proves the first assertion.
    The second inequality follows from \cite[Lem.~6.15]{kallenberg}.
\end{proof}
\noindent
For an arbitrary \(s_0 > 0\), combining \eqref{eq:bound_CF_1} and \Cref{lem:bound_q} yields
\begin{equation}
\label{eq:bound_CF_2}
\begin{aligned}
    \abs{\varphi_r(t) - \varphi(t)}
        &\leq 2\omega_{d-k} \abs{t} \int_0^{s_0}  \abs{g_r(s) - g(s)}\, \mu(\dint s) \\
        &\qquad + \frac{1}{2}\omega_{d-k} \abs{t}^2 \int_{s_0}^\infty \abs{g_r(s)}^2 + \abs{g(s)}^2 \,\mu(\dint s),
\end{aligned}
\end{equation}
for \(r>0\).

For the sake of readability, we will from now on simply write \(C\) for a universal positive constant which may depend on \(d,k\).
Constants may also change from line to line.

Using \eqref{eq:bound_CF_2} and \Cref{lem:bound_int}, the first term of \eqref{eq:bound_kol_1} can be bounded for any \(s_0 \geq 0\) and \(r>0\) by
\[\begin{aligned}
    &\frac{1}{\pi} \int_{-T}^T \frac{\abs{\varphi_r(t) - \varphi(t)}}{\abs{t}} \,\dint t\\
    &\qquad\leq C \int_{-T}^T \int_0^{s_0}  \abs{g_r(s) - g(s)}\, \mu(\dint s) \,\dint t
        + C \int_{-T}^T \abs{t} \int_{s_0}^\infty \abs{g_r(s)}^2 + \abs{g(s)}^2 \,\mu(\dint s)\, \dint t\\
    &\qquad\leq C T \int_0^{s_0}  \abs{g_r(s) - g(s)} \,\mu(\dint s)
        + C T^2 \int_{s_0}^\infty \abs{g_r(s)}^2 + \abs{g(s)}^2 \,\mu(\dint s)\\
    &\qquad\leq C T ( e^{-(k-1)r + (d-1)s_0} + h_k(r) e^{-2r + (d-k+2)s_0} ) + C T^2 e^{-(2k-d-1)s_0},
\end{aligned}\]
where \(h_3(r) = r\) and \(h_k(r)=1\) for \(k\geq 4\) (as in the proof of Lemma \ref{lem:bound_int}).
The Kolmogorov distance can thus be bounded from above by
\begin{equation}
\label{eq:bound_kol_2}
d_K(Y_r,Y) \leq C T ( e^{-(k-1)r + (d-1)s_0} + h_k(r) e^{-2r + (d-k+2)s_0} ) + C T^2 e^{-(2k-d-1)s_0} + C T^{-1}.
\end{equation}
It remains to find suitable values for \(T\) and \(s_0\) to balance out this expression.

We take the following approach:
Let \(\alpha, \beta >0\) be constants and set
\[ s_0 = \alpha r,\qquad T = e^{\beta r}. \]
Plugging this into \eqref{eq:bound_kol_2}, we obtain
\begin{equation*}\begin{aligned}
    d_K(Z_r, Z)
        &\leq C ( e^{\beta r - (k-1)r + (d-1)\alpha r} + h_k(r) e^{\beta r -2r + (d-k+2)\alpha r} ) + C e^{2\beta r-(2k-d-1)\alpha r} + C e^{-\beta r}\\
        &= C (e^{[\beta - (k-1) + (d-1)\alpha] r} + h_k(r) e^{[\beta  - 2 + (d-k+2)\alpha] r} + e^{[2\beta -(2k-d-1)\alpha] r} + e^{-\beta r}).
\end{aligned}\end{equation*}
To get an optimal rate of convergence, we  solve the following minimization problem:
\begin{equation*}
    \min_{\alpha,\beta>0}  \max \begin{Bmatrix*}[r]
        \beta &  + (d-1)\alpha & - (k-1),\\
        \beta & + (d-k+2)\alpha &- 2 ,\\
        2\beta & -(2k-d-1)\alpha,\\
        -\beta
    \end{Bmatrix*}.
\end{equation*}
This problem can be transformed into a linear optimization problem, which can then be solved using standard procedures.
The crucial first step is to rewrite the problem as
\[ \min t\colon \qquad \left\{\begin{array}{llrr}
    t &\geq \beta &  + (d-1)\alpha & - (k-1)\\
    t &\geq \beta & + (d-k+2)\alpha &- 2 \\
    t &\geq 2\beta & -(2k-d-1)\alpha& \\
    t &\geq -\beta & &
\end{array}\right\}  \quad\text{and}\quad \alpha, \beta > 0, \]
bringing this into standard form then is straightforward.

Since the problem is reasonably small, one can also solve it directly by hand.
If one chooses the second option, it is helpful to ignore the first term at the start.
Only optimizing the last three leads to
\[ \alpha^\ast = \frac{6}{k + d + 4},\qquad \beta^\ast = \frac{2(2k -d -1)}{k+d+4}, \]
and an optimum of \(-\beta^\ast\).
(At \((\alpha^\ast, \beta^\ast)\), the three terms coincide.
If one moves from \((\alpha^\ast,\beta^\ast)\) in any direction, then at least one of the terms increases, hence this pair yields the optimum.)
Plugging \(\alpha^\ast\) and \(\beta^\ast\) into the first term gives a value \(\leq -\beta^\ast\), hence one has obtained a solution of the full problem.

\subsection{Convergence of intersection processes}

For general \(m\), we show the following bound:

\begin{theorem}
\label{thm:rates_intersection}
    For \(2k > d+1\), let
    \[ \beta_{d,k} = \frac{2(2k -d -1)}{k+d+4}
    \qquad \text{and} \qquad
    w_{d,k}(r) = \begin{cases}
        e^{-\frac{2k-d-1}{3}r}&: \, 4k < 3d + 1,\\
        r^\frac{1}{3}e^{-\frac{2k-d-1}{3}r}&: \, 4k = 3d + 1,\\
        e^{-\frac{2(d-k)}{3}r}&: \, 4k > 3d + 1.
    \end{cases}\]
    If $k\ge 4$, then
    \begin{equation*}
        d_K\left(\frac{F^{(m)}_r - \E[F^{(m)}_r]}{e^{r(k-1)}}, C(d,k,m) \frac{\omega_k}{(k-1)2^{k-1}} Z\right)
            \leq C \cdot \exp(-\beta_{d,k}\cdot r) + C \cdot w_{d,k}(r),
    \end{equation*}
    for \(r \geq 1\) and if $k=3$, then
    \begin{equation*}
        d_K\left(\frac{F^{(m)}_r - \E[F^{(m)}_r]}{e^{r(k-1)}}, C(d,k,m) \frac{\omega_k}{(k-1)2^{k-1}} Z\right)
            \leq C \cdot r \cdot \exp(-\beta_{d,k}\cdot r) + C \cdot w_{d,k}(r),
    \end{equation*}
    for \(r \geq 1\), where \(C>0\) is some constant that depends only on \(d\) and \(k\).
\end{theorem}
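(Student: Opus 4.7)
The plan is to use the chaos decomposition \eqref{eq:decomp_F_m_general} to split the scaled functional into the $m=1$ term plus an asymptotically negligible remainder, and then apply a standard Slutsky-type bound for the Kolmogorov distance. Write
\[
    \frac{F^{(m)}_r - \E[F^{(m)}_r]}{e^{r(k-1)}} \;=\; C(d,k,m)\, Y_r \;+\; R_r, \qquad R_r \;\coloneqq\; e^{-r(k-1)}\sum_{i=2}^m I_i\bigl(f^{(m)}_{r,i}\bigr),
\]
so that the target quantity equals $C(d,k,m)Y$. Since the density bound for $Y$ from \Cref{lem:bounded_density} transfers to $C(d,k,m)Y$ after rescaling, a standard argument (conditioning on $\{|R_r|\le \delta\}$) yields, for every $\delta>0$,
\[
    d_K\!\left(C(d,k,m)Y_r + R_r,\, C(d,k,m) Y\right) \;\le\; d_K(Y_r, Y) \;+\; M\,\delta \;+\; \P(|R_r|>\delta),
\]
where $M$ depends only on $d,k$ and we have used $d_K(cU,cV)=d_K(U,V)$ for $c>0$. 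The first term is controlled by \Cref{thm:convergence_rates_1}, giving $\lesssim e^{-\beta_{d,k} r}$ (resp.\ $\lesssim r\, e^{-\beta_{d,k} r}$ for $k=3$). The third term will be treated via Chebyshev's inequality after computing $\V(R_r)$.

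The key variance estimate proceeds by case analysis based on \Cref{lem:A_asymptotics} applied to $i\ge 2$. Since $\V(R_r) = e^{-2r(k-1)}\sum_{i=2}^m i!\, A^{(m)}_{r,i}$, one compares $2i(d-k)$ with $d-1$. For $i=2$ the critical threshold is $4k$ versus $3d+1$, and for $i\ge 3$ one always has $2i(d-k)\ge 4(d-k)$ so the contribution is bounded by either $e^{r(d-1)}$ or the $i=2$ term. A short calculation in the three regimes gives
\[
    \V(R_r) \;\lesssim\;
    \begin{cases}
        e^{-r(2k-d-1)} & : 4k < 3d+1,\\
        r\, e^{-r(2k-d-1)} & : 4k = 3d+1,\\
        e^{-2r(d-k)} & : 4k > 3d+1,
    \end{cases}
\]
after dividing through by $e^{2r(k-1)}$. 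In the third regime one uses that $4k > 3d+1$ is equivalent to $e^{2r(2k-d-1)}$ dominating $e^{r(d-1)}$.

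To finish, apply Chebyshev to get $\P(|R_r|>\delta)\le \V(R_r)/\delta^2$, and then optimize the bound $M\delta + \V(R_r)/\delta^2$ in $\delta>0$. The optimum is attained at $\delta \asymp \V(R_r)^{1/3}$, yielding a contribution of order $\V(R_r)^{1/3}$. Substituting the three variance estimates exactly reproduces the function $w_{d,k}(r)$ in the statement. Adding the two contributions completes the proof.

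The main (and essentially only) obstacle is bookkeeping: one must check carefully that for every $i\ge 2$ the term $i!\, A^{(m)}_{r,i}/e^{2r(k-1)}$ is controlled by the worst case $i=2$, and verify that the boundary case $4k=3d+1$ only appears through the $i=2$ contribution (higher $i$ fall in the regime $2i(d-k)>d-1$ and contribute at most $e^{r(d-1)}$, which is of strictly lower order after the $1/3$ exponent is taken). Once the case distinction is organized, the rest reduces to the already-established bound for $m=1$ and a routine Chebyshev/optimization argument.
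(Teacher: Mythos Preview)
Your argument is correct and matches the paper's proof essentially step for step: the same chaos decomposition \eqref{eq:decomp_F_m_general}, the same variance bound on the remainder via \Cref{lem:A_asymptotics} (the paper also observes that the $i=2$ term dominates all higher chaos contributions), and the same Chebyshev-plus-optimization trick to extract the $\V(R_r)^{1/3}$ rate (which the paper isolates as a separate lemma, \Cref{lem:ineq_kol_dist}). The only cosmetic difference is that the paper packages the Slutsky-type Kolmogorov bound into its own lemma rather than doing it inline.
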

As a direct consequence we get the following more concise bound:
\begin{corollary}\label{cor:rates}
   If \(\beta_{d,k}\) is as in Theorem \ref{thm:rates_intersection}, then
    \[\begin{aligned}
        &d_K\left(\frac{F^{(m)}_r - \E[F^{(m)}_r]}{e^{r(k-1)}}, C(d,k,m) \frac{\omega_k}{(k-1)2^{k-1}} Z\right)\\
        &\hspace{7cm}\leq \begin{cases}
            C \cdot r \cdot \exp(-\beta_{d,k} \cdot r) &: \, d=4,k=3, \\
            C \cdot \exp(- \frac{2}{3} \cdot  r) &: \, d \geq 12, k= d-1, \\
            C \cdot \exp(- \beta_{d,k} \cdot r) &: \, \text{otherwise},
        \end{cases}
    \end{aligned}\]
    for \(r\geq 1\), where \(C>0\) depends only on \(d\) and \(k\).
\end{corollary}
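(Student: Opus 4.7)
The plan is to derive the corollary from Theorem \ref{thm:rates_intersection} by a direct case analysis, identifying in each parameter regime whether $e^{-\beta_{d,k}r}$ or $w_{d,k}(r)$ is the dominant term for $r\ge 1$. Apart from the special $(d,k)=(4,3)$ case, the theorem delivers a bound of the form $C\,e^{-\beta_{d,k}r}+C\,w_{d,k}(r)$, so the whole issue is to compare the exponent $\beta_{d,k}=\tfrac{2(2k-d-1)}{k+d+4}$ with the exponents appearing in $w_{d,k}$, namely $\tfrac{2k-d-1}{3}$ (first two cases) and $\tfrac{2(d-k)}{3}$ (third case). The admissibility constraints $2k>d+1$ and $k\le d-1$ force $k\ge 3$, so $k+d+4\ge 10$ throughout.

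First I would dispose of the two regimes in which $w_{d,k}$ is built from $\tfrac{2k-d-1}{3}$. Since $\beta_{d,k}/\tfrac{2k-d-1}{3}=\tfrac{6}{k+d+4}<1$, the exponential part of $w_{d,k}$ decays strictly faster than $e^{-\beta_{d,k}r}$; in the equality case $4k=3d+1$ the polynomial prefactor $r^{1/3}$ is absorbed into a constant on $[1,\infty)$ because $r^{1/3}e^{-(\gamma-\beta_{d,k})r}$ is bounded there. Hence whenever $4k\le 3d+1$, the $w$-term can be absorbed into the $e^{-\beta_{d,k}r}$ term (up to a constant), leaving the bound $C\,e^{-\beta_{d,k}r}$, or $C\,r\,e^{-\beta_{d,k}r}$ in the $(d,k)=(4,3)$ sub-case where the theorem itself carries an $r$-prefactor.

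For $4k>3d+1$ I would compare $\beta_{d,k}$ with $\tfrac{2(d-k)}{3}$ via the equivalent inequality $3(2k-d-1)\ge (d-k)(k+d+4)$. Substituting $a:=d-k\ge 1$ turns this into a linear inequality in $d$; a short computation shows it can hold only for $a=1$, and in that case it reduces to $d\ge 12$. Consequently, for $k=d-1$ with $d\ge 12$ one has $\beta_{d,k}\ge 2/3$, so $w_{d,k}(r)=e^{-2r/3}$ dominates and the bound simplifies to $C\,e^{-2r/3}$. In every other case with $4k>3d+1$ the reverse inequality $\beta_{d,k}<\tfrac{2(d-k)}{3}$ holds, so $w_{d,k}(r)\le C\,e^{-\beta_{d,k}r}$ on $[1,\infty)$ and the first term dominates.

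Collecting the three outcomes yields exactly the three cases in the corollary. There is no genuine obstacle here beyond the bookkeeping; the only delicate point is the sharp threshold $d\ge 12$ in the second case, which is settled by the explicit linear inequality above.
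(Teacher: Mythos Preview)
Your approach is correct and matches the paper's: the paper's proof simply states that $\exp(-\beta_{d,k}r)\lesssim w_{d,k}(r)=e^{-2r/3}$ for $d\ge 12,\ k=d-1$, and $w_{d,k}(r)\lesssim\exp(-\beta_{d,k}r)$ otherwise, then invokes Theorem~\ref{thm:rates_intersection}. You carry out the same comparison, just with the supporting inequalities (in particular the reduction $3(2k-d-1)\ge(d-k)(k+d+4)$ and the substitution $a=d-k$) spelled out, which the paper omits.
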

\begin{proof}
    Note that \(\exp(-\beta_{d,k}\cdot r) \lesssim w_{d,k}(r) = \exp(-\frac{2}{3}r) \) for \(d \geq 12\) and \(k = d-1\) and that
    in all other cases \(w_{d,k}(r) \lesssim \exp(-\beta_{d,k}\cdot r)\).
    The result then follows directly from \Cref{thm:rates_intersection}.
\end{proof}

We now prepare the proof of \Cref{thm:rates_intersection}.
Recall from \eqref{eq:decomp_F_m} that
\begin{align}
\nonumber
    \frac{F^{(m)}_r - \E[F^{(m)}_r]}{e^{r(k-1)}}
        &= C(d,k,m) \frac{F^{(1)}_r - \E[F^{(1)}_r]}{e^{r(k-1)}} + \sum_{i=2}^m \frac{I_i(f^{(m)}_{r,i})}{e^{r(k-1)}} \\
        &= C(d,k,m) Y_r + W_r, \label{eq:split}
\end{align}
with
\[ W_r \coloneqq \sum_{i=2}^m \frac{I_i(f^{(m)}_{r,i})}{e^{r(k-1)}}. \]
From \Cref{lem:A_asymptotics}, it follows that
\[ \V(W_r) \lesssim \frac{A^{(m)}_{r,2}}{e^{2r(k-1)}} \asymp \frac{1}{e^{2r(k-1)}} \cdot \begin{cases}
    e^{r(d-1)}&: \, 4(d-k)> d-1,\\
    re^{r(d-1)}&: \, 4(d-k)= d-1,\\
    e^{2r(d-2(d-k)-1)}&: \, 4(d-k)<d-1.
\end{cases} \]
Further recall that \(I_i(f^{(m)}_{r,i})\) is centered and hence \(\E[W_r] = 0\) and \(\V(W_r) = \E[W_r^2]\).
Using this fact and the above bound, we obtain
\begin{equation}
\label{eq:bound_W_squared}
    \E[W_r^2] \lesssim \begin{cases}
        e^{-(2k-d-1)r}&: \, 4k < 3d + 1,\\
        re^{-(2k-d-1)r}&: \, 4k = 3d + 1,\\
        e^{-2(d-k)r}&: \, 4k > 3d + 1.
    \end{cases}
\end{equation}
To bound the contribution of \(W_r\) to the Kolmogorov distance, we will use the following bound (for a related result, see \cite[Lem.~2.1]{KRT22+}):

\begin{lemma}
\label{lem:ineq_kol_dist}
    Let \(X, W, Z\) be random variables.
    Assume that \(Z\) has bounded density \(f_Z\).
    If \(\varepsilon > 0\), then
    \[ d_K(X + W, Z) \leq d_K(X, Z) + \varepsilon \norm{f_Z}_\infty + \P(\abs{W} > \varepsilon). \]
    If \(\E[W^2] < \infty\), it holds that
    \[ d_K(X + W, Z) \leq d_K(X, Z) + (\norm{f_Z}_\infty + 1) \E[W^2]^\frac{1}{3}. \]
\end{lemma}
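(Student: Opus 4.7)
The plan is standard, but let me lay it out.

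For the first inequality, I would start from the observation that for any $x \in \mathbb{R}$ and $\varepsilon > 0$ one has the set inclusions
\[ \{X + W \leq x\} \subseteq \{X \leq x + \varepsilon\} \cup \{|W| > \varepsilon\} \qquad \text{and} \qquad \{X \leq x - \varepsilon\} \subseteq \{X+W \leq x\} \cup \{|W| > \varepsilon\}.\]
Taking probabilities in the first inclusion yields $\mathbb{P}(X+W\leq x) \leq \mathbb{P}(X \leq x+\varepsilon) + \mathbb{P}(|W| > \varepsilon)$. I then replace $\mathbb{P}(X \leq x+\varepsilon)$ by $\mathbb{P}(Z \leq x+\varepsilon) + d_K(X,Z)$, and use the existence of the density $f_Z$ together with $\|f_Z\|_\infty$ to bound $\mathbb{P}(Z\leq x+\varepsilon) - \mathbb{P}(Z\leq x) = \int_x^{x+\varepsilon} f_Z(u)\,\mathrm{d}u \leq \varepsilon \|f_Z\|_\infty$. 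Collecting gives
\[ \mathbb{P}(X+W\leq x) - \mathbb{P}(Z\leq x) \leq d_K(X,Z) + \varepsilon \|f_Z\|_\infty + \mathbb{P}(|W|>\varepsilon). \]
The second inclusion, treated analogously, produces the matching lower bound $\mathbb{P}(Z\leq x) - \mathbb{P}(X+W\leq x) \leq d_K(X,Z) + \varepsilon \|f_Z\|_\infty + \mathbb{P}(|W|>\varepsilon)$. Taking the supremum over $x$ yields the first claim.

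For the second inequality, I apply Chebyshev's inequality, $\mathbb{P}(|W|>\varepsilon) \leq \varepsilon^{-2} \mathbb{E}[W^2]$, to the bound just obtained, giving
\[ d_K(X+W, Z) \leq d_K(X,Z) + \varepsilon \|f_Z\|_\infty + \varepsilon^{-2} \mathbb{E}[W^2]. \]
Then I optimize the right-hand side in $\varepsilon > 0$. Rather than computing the exact minimizer (which would involve an unappealing constant), I just plug in the near-optimal choice $\varepsilon = \mathbb{E}[W^2]^{1/3}$, so that the two error terms balance up to the $\|f_Z\|_\infty$ factor: $\varepsilon\|f_Z\|_\infty + \varepsilon^{-2}\mathbb{E}[W^2] = \mathbb{E}[W^2]^{1/3}(\|f_Z\|_\infty + 1)$, yielding the claimed bound.

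There is no serious obstacle here: the only subtle point is making sure the density bound is applied correctly to the interval $(x, x+\varepsilon]$, and that the "symmetric" inclusion is written so that $|W|>\varepsilon$ (not a one-sided event) appears, which is what allows the same probability to serve in both directions. The rest is routine.
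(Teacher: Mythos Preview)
Your proposal is correct and follows essentially the same route as the paper: the paper also obtains $F_{X+W}(t) \leq F_X(t+\varepsilon) + \P(|W|>\varepsilon)$ (via the decomposition $\{|W|\leq\varepsilon\}\cup\{|W|>\varepsilon\}$ rather than your set inclusion, but these are the same thing), then inserts $d_K(X,Z)$ and the density bound on $F_Z$, handles the lower bound symmetrically, and for the second claim applies Markov/Chebyshev with the identical choice $\varepsilon = \E[W^2]^{1/3}$.
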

\begin{proof}
    For \(\varepsilon > 0\) and \(t\in\R\), first observe that
    \[\begin{aligned}
        \P(X+W \leq t)
            &= \P(X+W \leq t, \abs{W}\leq \varepsilon) + \P(X+W\leq t, \abs{W}>\varepsilon)\\
            &\leq \P(X - \varepsilon \leq t) + \P(\abs{W}>\varepsilon)\\
            &= F_X(t+\varepsilon) + \P(\abs{W} > \varepsilon),
    \end{aligned}\]
    hence
    \begin{align*}
        &\P(X+W\leq t) - \P(Z\leq t)\\
            &\quad\leq F_X(t+\varepsilon) - F_Z(t) + \P(\abs{W} > \varepsilon)\\
            &\quad\leq \abs{F_X(t+\varepsilon) - F_Z(t+\varepsilon)} + \abs{F_Z(t+\varepsilon) - F_Z(t)} + \P(\abs{W} > \varepsilon)\\
            &\quad\leq d_K(X, Z) + \varepsilon \norm{f_Z}_\infty + \P(\abs{W} > \varepsilon).
    \end{align*}
    The bound in the other direction is achieved analogously.
    First, argue that
    \[ \P(X+W \leq t) \geq F_X(t-\varepsilon) - \P(\abs{W}>\varepsilon), \]
    then proceed as above.

    For the second statement, set \(\varepsilon \coloneqq \E[W^2]^\frac{1}{3}\).
    An application of the Markov inequality to the first bound gives
    \[ d_K(X + W, Z) \leq d_K(X, Z) + \varepsilon \norm{f_Z}_\infty + \frac{\E[W^2]}{\varepsilon^2}
        = d_K(X,Z) + (\norm{f_Z}_\infty + 1) \E[W^2]^\frac{1}{3}, \]
    which proves the remaining assertion.
\end{proof}

We now have all prerequisites to prove the main theorem of this section.

\begin{proof}[Proof of \Cref{thm:rates_intersection}]
    Note that \(C(d,k,m) \frac{\omega_k}{(k-1)2^{k-1}} Z\) has bounded density by \Cref{lem:bounded_density}.
    We can thus apply \Cref{lem:ineq_kol_dist} to \eqref{eq:split}, which yields
    \[\begin{aligned}
        &d_K\left(\frac{F^{(m)}_r - \E[F^{(m)}_r]}{e^{r(k-1)}}, C(d,k,m) \frac{\omega_k}{(k-1)2^{k-1}} Z\right)\\
        &\qquad\leq d_K\left(C(d,k,m)\cdot Y_r, C(d,k,m) \frac{\omega_k}{(k-1)2^{k-1}} Z\right) + C \cdot \E[W_r^2]^\frac{1}{3}
    \end{aligned}\]
    for some constant \(C>0\) that depends on \(d\) and \(k\).
    (Technically, \(C\) depends on \(m\) as well, but we can get rid of this dependence by taking the maximum over all admissible \(m\).)
    Using the fact that \(d_K(aU, aV) = d_K(U,V)\), for any random variables \(U, V\) and any constant \(a \neq 0\), and writing
    \(Y = \frac{\omega_k}{(k-1)2^{k-1}}Z\), we get
    \[d_K\left(\frac{F^{(m)}_r - \E[F^{(m)}_r]}{e^{r(k-1)}}, C(d,k,m) \frac{\omega_k}{(k-1)2^{k-1}} Z\right)
        \leq d_K\left(Y_r, Y\right) + C \cdot \E[W_r^2]^\frac{1}{3}.\]
    The claim now follows from \Cref{thm:convergence_rates_1} and \eqref{eq:bound_W_squared}.
\end{proof}

    \section{Numerical calculations}
\label{sec:density_approximation}

To better understand the limit distribution for $2k>d+1$, we would like to calculate its probability density.
For this purpose, write \(\psi_{d,k}\) for the characteristic function of \(Z = Z_{d,k}\), that is
\[ \psi_{d,k}(t) = \exp\left( \omega_{d-k} \int_0^\infty (e^{it h(s)} - 1 - it h(s)) \,\mu(\dint s) \right),\qquad t\in\R, \]
where \(h(s) = \cosh^{-(k-1)}(s)\) for $s\in\R$.

From the characteristic function, we can determine the cumulants and thus the moments of \(Z_{d,k}\) (see \cite[Rem.~4.2]{BHT}),
in particular we see that the mean is zero and that the variance is given by
\[ \sigma_{d,k}^2 \coloneqq \V(Z_{d,k}) = \omega_{d-k} \int_0^\infty \cosh^{2-k}(s) \sinh^{d-k-1}(s) \dint s. \]
To compare the limit distributions for different values of \(d\) and \(k\), it is useful to consider the standardized random variable
\(Z_{d,k}^* \coloneqq Z_{d,k}/\sigma_{d,k}\).
The corresponding characteristic function \(\widetilde{\psi}_{d,k}\) is given by 
\[ \widetilde{\psi}_{d,k}(t) = \psi_{d,k}(t/\sigma_{d,k}),\qquad t\in\R. \]

The density \(f_{d,k}\) of \(Z_{d,k}^*\) can be obtained from the classical  inversion formula (see \cite[Thm.~3.3.14]{Durrett})
\begin{equation}
\label{eq:inversion_formula}
    f_{d,k}(x) = \frac{1}{2\pi} \int_{-\infty}^\infty e^{-itx} \widetilde{\psi}_{d,k}(t) \dint t,\qquad x\in\R.
\end{equation}
The integrability of the characteristic function and thus the applicability of the inversion formula was already established
in \Cref{lem:bounded_density}.
Since it does not seem to be possible to express this integral by elementary functions, we will approximate the right-hand side of  \eqref{eq:inversion_formula} numerically.

From the considerations in \Cref{lem:bounded_density} it follows that \(\widetilde{\psi}_{d,k}\) decays exponentially in \(t\),
so it makes sense to approximate the integral on the right-hand side of \eqref{eq:inversion_formula} by integrating over a bounded interval:
\[ f_{d,k}(x) \approx \frac{1}{2\pi} \int_{-T}^{T} e^{-itx} \widetilde{\psi}_{d,k}(t) \dint t.  \]
To discretize the integral, we take \(M+1 \in \N\) equidistant points \(t_j = \left(-1+\frac{2j}{M}\right)T\), \(j=0,\ldots,M\), and
use the left point approximation \(\int_a^b g(t) \dint t \approx (b-a) g(a)\):
\[ \frac{1}{2\pi} \int_{-T}^T e^{-itx} \widetilde{\psi}_{d,k}(t) \dint t
        = \frac{1}{2\pi} \sum_{j=0}^{M-1} \int_{t_j}^{t_{j+1}} e^{-itx} \widetilde{\psi}_{d,k}(t) \dint t
        \approx \frac{T}{\pi  M} \sum_{j=0}^{M-1} e^{-it_{j}x} \widetilde{\psi}_{d,k}(t_j).  \]

The evaluation of \(\widetilde{\psi}_{d,k}(t)\) comes with some difficulties. 
The characteristic function itself contains an integral that needs to be numerically approximated first.
In practice this turned out to be challenging for the computer, so we took a different approach.

To approximate
\[ \widetilde{\psi}_{d,k}(t)
    = \exp\left( \omega_{d-k} \int_0^\infty (e^{it h(s)/\sigma_{d,k}} - 1 - it h(s)/\sigma_{d,k}) \,\mu(\dint s) \right),\]
we expand  the exponent and interchange summation and integration to get
\[\begin{aligned}
    &\omega_{d-k} \int_0^\infty \sum_{n=2}^\infty \frac{(ith(s)/\sigma_{d,k})^n}{n!} \,\mu(\dint s)\\
    &\qquad= \omega_{d-k} \int_0^\infty \sum_{n=2}^\infty \frac{(it/\sigma_{d,k})^n}{n!} \cosh^{k-n(k-1)}(s) \sinh^{d-k-1}(s)\, \dint s\\
    &\qquad= \omega_{d-k} \sum_{n=2}^\infty \frac{(it/\sigma_{d,k})^n}{n!} \int_0^\infty \cosh^{k-n(k-1)}(s) \sinh^{d-k-1}(s) \,\dint s
\end{aligned}\]
 for the argument of the exponential function. 
The application of the dominated convergence theorem in the last equality is justified since \(2k > d+1\) implies
\[\begin{aligned}
    \int_0^\infty \cosh^{k-n(k-1)}(s) \sinh^{d-k-1}(s) \,\dint s
    &\leq \int_0^\infty \left(e^s/2\right)^{(k-n(k-1))} \left(e^s/2\right)^{(d-k-1)}\, \dint s\\
    &\leq 2^{n(k-1)-d+1} \int_0^\infty e^{-s} \,\dint s \leq \left(2^{k-1}\right)^n
\end{aligned}\]
for all \(n\geq 2\).

As pointed out in \cite[Rem.~4.2]{BHT}, for \(-a > b > -1\) it holds that
\begin{equation*}
    I(a,b) \coloneqq \int_0^\infty \cosh^a(s) \sinh^b(s) \dint s = \frac{1}{2}\frac{\Gamma(-\frac{a+b}{2}) \Gamma(\frac{b+1}{2})}{\Gamma(\frac{1-a}{2})}.
\end{equation*}
With this identity and the definition of \(\sigma_{d,k}\), we can rewrite the above expression as
\[\begin{aligned}
    &\omega_{d-k} \sum_{n=2}^\infty \frac{(it)^n}{n!} (\omega_{d-k} I(2-k, d-k-1))^{-\frac{n}{2}} I(k-n(k-1), d-k-1)\\
    &\qquad= \sum_{n=2}^\infty \frac{(it)^n}{n!} \omega_{d-k}^{1-\frac{n}{2}}
        \left( \frac{1}{2}\frac{\Gamma(\frac{2k-d-1}{2}) \Gamma(\frac{d-k}{2})}{\Gamma(\frac{k-1}{2})} \right)^{-\frac{n}{2}}
        \frac{1}{2}\frac{\Gamma(\frac{n(k-1)+1-d}{2}) \Gamma(\frac{d-k}{2})}{\Gamma(\frac{n(k-1)+1-k}{2})}\\
    &\qquad= \sum_{n=2}^\infty \frac{(it)^n}{n!} \left(\frac{1}{2} \Gamma\left(\frac{d-k}{2}\right) \omega_{d-k}\right)^{1-\frac{n}{2}}
        \left( \frac{\Gamma(\frac{2k-d-1}{2})}{\Gamma(\frac{k-1}{2})} \right)^{-\frac{n}{2}}
        \frac{\Gamma(\frac{n(k-1)+1-d}{2})}{\Gamma(\frac{n(k-1)+1-k}{2})}\\
    &\qquad= \sum_{n=2}^\infty \frac{(it)^n}{n!} \left( \pi^{\frac{d-k}{2}} \right)^{1-\frac{n}{2}}
        \left( \frac{\Gamma(\frac{k-1}{2})}{\Gamma(\frac{2k-d-1}{2})} \right)^{\frac{n}{2}}
        \frac{\Gamma(\frac{n(k-1)+1-d}{2})}{\Gamma(\frac{n(k-1)+1-k}{2})}.
\end{aligned}\]
We then approximate this sum by then \(N\)-th partial sum for some integer \(N \geq 2\).
Note that for higher values of \(t\) one might have to choose \(N\) larger to get a sufficient accuracy.

From the above calculation it also follows that for \(n\geq 2\) the cumulants of \(Z_{d,k}^*\) can be expressed as
\begin{align}
    \cum_n(Z_{d,k}^*)
        &= (-i)^n \frac{\partial^n}{\partial t^n} \log \widetilde{\psi}_{d,k}(t) \big\vert_{t=0} \nonumber\\
        &= \left( \pi^{\frac{d-k}{2}} \right)^{1-\frac{n}{2}}
            \left( \frac{\Gamma(\frac{k-1}{2})}{\Gamma(\frac{2k-d-1}{2})} \right)^{\frac{n}{2}}
            \frac{\Gamma(\frac{n(k-1)+1-d}{2})}{\Gamma(\frac{n(k-1)+1-k}{2})},
    \label{eq:cumulants}
\end{align}
which will be useful later.

There are three constants that need to be chosen for the approximation, namely \(T, M\) and \(N\).
Finding appropriate values can be achieved by a trial and error approach.
We only calculated characteristic functions and densities for \(d \leq 15\), since after that point the calculations could not
be carried out accurately enough by the computer anymore.
For those values of \(d\) (and \(\frac{d+1}{2} < k \leq d-1\)) the choices \(T = 10\), \(M = 200\) and \(N = 26\) have proven to be appropriate.

\begin{figure}
    \centering
    \begin{subfigure}{0.45\textwidth}
        \includegraphics[width=\textwidth]{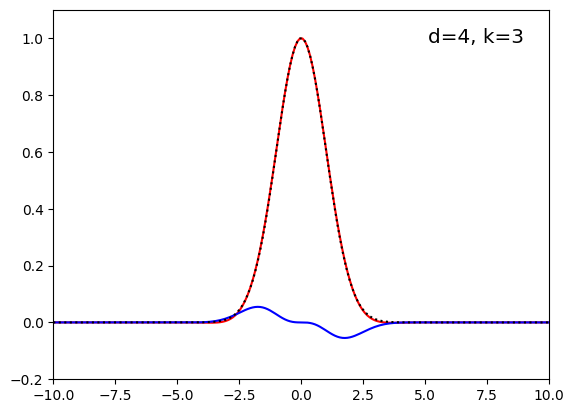}
    \end{subfigure}
    \hfill
    \begin{subfigure}{0.45\textwidth}
        \includegraphics[width=\textwidth]{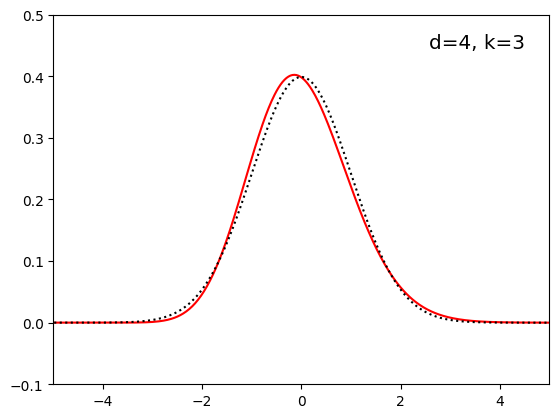}
    \end{subfigure}
    \begin{subfigure}{0.45\textwidth}
        \includegraphics[width=\textwidth]{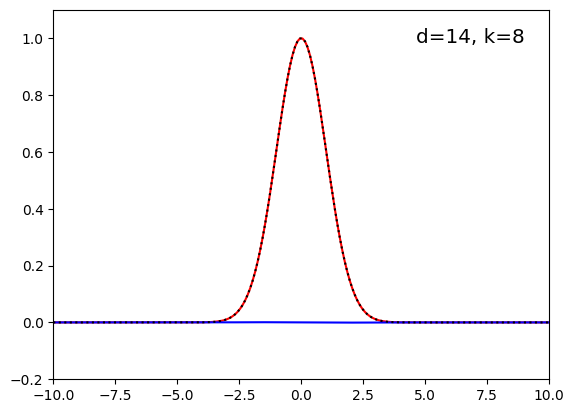}
    \end{subfigure}
    \hfill
    \begin{subfigure}{0.45\textwidth}
        \includegraphics[width=\textwidth]{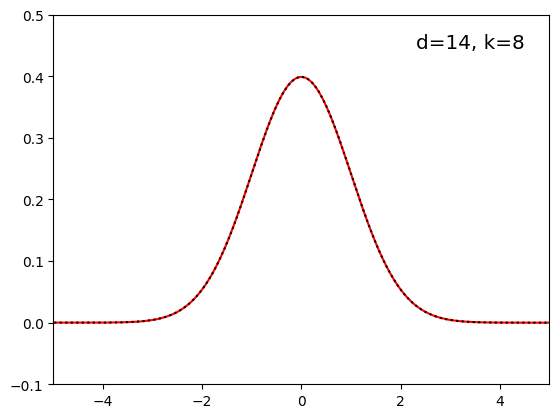}
    \end{subfigure}
    
    \begin{subfigure}{0.45\textwidth}
        \includegraphics[width=\textwidth]{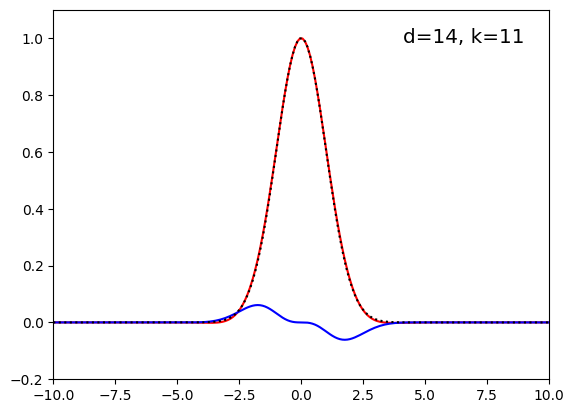}
    \end{subfigure}
    \hfill
    \begin{subfigure}{0.45\textwidth}
        \includegraphics[width=\textwidth]{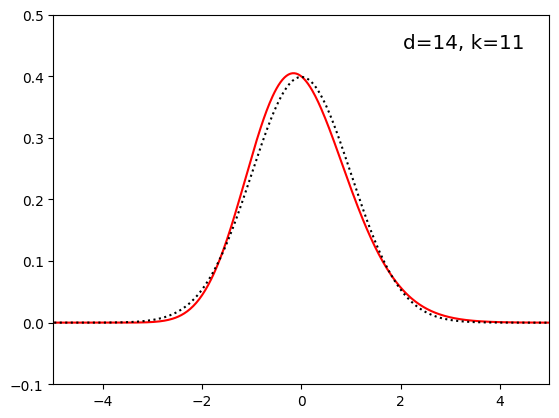}
    \end{subfigure}
    \begin{subfigure}{0.45\textwidth}
        \includegraphics[width=\textwidth]{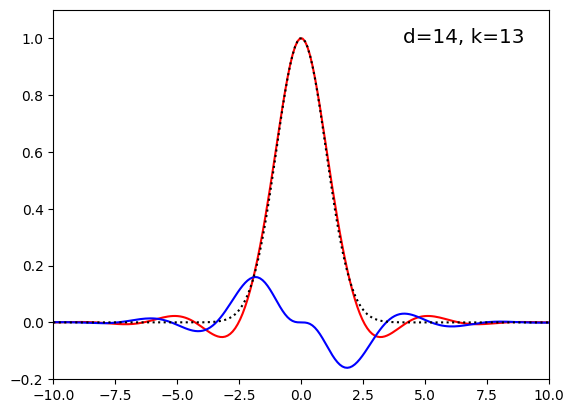}
    \end{subfigure}
    \hfill
    \begin{subfigure}{0.45\textwidth}
        \includegraphics[width=\textwidth]{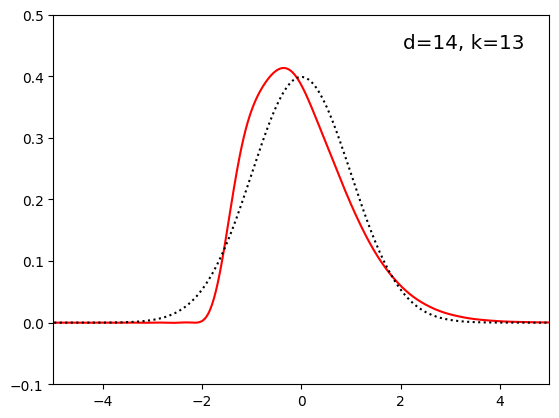}
    \end{subfigure}
    \caption{Left: Real part (red) and imaginary part (blue) of \(\widetilde{\psi}_{d,k}\).
    The dotted line is the (entirely real) characteristic function of a standard normal distribution.
    Right: Density of \(Z_{d,k}^*\) (red) and density of a standard normal distribution (dotted).}
    \label{fig:figure_1}
    \vspace{-27pt}
\end{figure}

The illustrations in \Cref{fig:figure_1} might lead one to the following conjecture:
If one lets \(d\) go to infinity and takes \(k \approx d/2\), then the distribution will converge to a standard normal distribution, while
for \(k\approx d\), it will diverge.
The behavior of the cumulants, on the other hand, does not seem to support this hypothesis. 
It is easy to see that the first cumulant is \(0\) and since \(Z_{d,k}/\sigma_{d,k}\) has variance \(1\), the second cumulant is equal to \(1\),
so the first two cumulants always agree with the ones of a standard normal distribution.
However, for \(n \geq 3\), the \(n\)-th cumulant of a standard normal distribution is zero, while the ones of \(Z_{d,k}/\sigma_{d,k}\)
turn out to diverge as \(d\) approaches infinity, independently of how \(k\) is chosen.

\begin{proposition}
    Let \((d_j), (k_j)\) be sequences of integers that satisfy \(0 \leq k_j \leq d_j -1\),  \( 2 k_j > d_j + 1 \)
    and \(d_j \to \infty\), \(j\to \infty\).
    It then holds that as \(j \to \infty\),
    \[ \cum_n(Z_{d_j,k_j}^*) \to \infty, \]
    for all \(n \geq 3\).
\end{proposition}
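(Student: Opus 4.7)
The plan is to apply Stirling's asymptotic directly to the explicit formula \eqref{eq:cumulants}. Setting $p = k-1$ and $q = d-k$, the hypothesis $2k > d+1$ becomes $p > q \geq 1$ (both integers), and $d_j \to \infty$ combined with $p_j \geq q_j + 1$ forces $p_j \to \infty$. All four gamma arguments appearing in \eqref{eq:cumulants}, namely $p/2$, $(p-q)/2$, $((n-1)p - q)/2$ and $(n-1)p/2$, are at least $1/2$, so the uniform Stirling estimate
\[
    \log\Gamma(x) = \left(x-\tfrac{1}{2}\right)\log x - x + \tfrac{1}{2}\log(2\pi) + O(1), \qquad x \geq 1/2,
\]
applies (the Binet integral bounds the remainder uniformly on $[1/2,\infty)$).

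The next step is some careful bookkeeping: expand each $\log\Gamma$, apply the algebraic identity $u\log u - v\log v = v\log(u/v) + (u-v)\log u$ (with $u = p/2$, $v = (p-q)/2$ and analogously for the second ratio), and track the cancellations between the $(n/2)[\log\Gamma(p/2)-\log\Gamma((p-q)/2)]$ contribution, the $\log\Gamma(((n-1)p-q)/2)-\log\Gamma((n-1)p/2)$ contribution, and the $\pi$-prefactor. I expect this to yield
\[
    \log\cum_n(Z^*_{d,k}) = \tfrac{n-2}{4}\, q \log\!\left(\tfrac{p}{2\pi e}\right) - \tfrac{q}{2}\log(n-1) + B - \widetilde C + O(1),
\]
where the two nonnegative correction terms are
\[
    B \coloneqq \tfrac{n}{4}(p-q-1)\log\tfrac{p}{p-q}, \qquad \widetilde C \coloneqq \tfrac{1}{2}\bigl((n-1)p-q-1\bigr)\log\tfrac{(n-1)p}{(n-1)p-q}.
\]
The elementary inequality $\log(1+x) \leq x$ applied inside each gives $B \leq nq/4$ and $\widetilde C \leq q/2$, so absorbing all linear-in-$q$ terms yields a constant $C_n$ (depending only on $n$) with
\[
    \log\cum_n(Z^*_{d,k}) \geq \tfrac{n-2}{4}\, q \log p - C_n\, q.
\]

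To finish, recall $n \geq 3$, $q_j \geq 1$, and $\log p_j \to \infty$: if $(q_j)$ remains bounded, the main term $\tfrac{n-2}{4} q_j \log p_j$ diverges while the correction is bounded; if $q_j \to \infty$, factor the bound as $q_j\bigl(\tfrac{n-2}{4}\log p_j - C_n\bigr)$ and observe that both factors tend to $+\infty$. Either way $\log\cum_n(Z^*_{d_j,k_j}) \to +\infty$, which proves the claim. The main obstacle is the Stirling bookkeeping that isolates the clean leading term $\tfrac{n-2}{4} q \log p$ together with the two explicit nonnegative remainders $B$ and $\widetilde C$; once that identification is in place, only the trivial bound $\log(1+x) \leq x$ is needed to close the argument.
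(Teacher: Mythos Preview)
Your approach is correct and the Stirling bookkeeping checks out: with $p=k-1$, $q=d-k$ one indeed obtains
\[
\log\cum_n(Z^*_{d,k})=\tfrac{n-2}{4}\,q\log\!\Bigl(\tfrac{p}{2\pi e}\Bigr)-\tfrac{q}{2}\log(n-1)+B-\widetilde C+O(1),
\]
with $B,\widetilde C\ge 0$ and $B\le nq/4$, $\widetilde C\le q/2$ by $\log(1+x)\le x$. (A small remark: the case split ``$q_j$ bounded vs.\ $q_j\to\infty$'' is not needed---once $\log p_j$ is large enough that $\tfrac{n-2}{4}\log p_j-C_n>0$, the bound $q_j\ge 1$ alone gives divergence.)

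This is a genuinely different route from the paper's proof. The paper argues by a three-way case distinction on the relative sizes of $k_j$ and $d_j$: for $d_j-k_j$ constant it uses $\Gamma(x+y)/\Gamma(x)\sim x^y$; for $k_j\ge\tfrac34 d_j$ and for $k_j\le\tfrac34 d_j$ it rewrites the Gamma ratios as finite products and bounds the factors directly (invoking $m!\ge(m/e)^m$ in the last case). Your uniform Stirling expansion avoids the case analysis entirely and, as a byproduct, isolates the precise leading behaviour $\log\cum_n\sim\tfrac{n-2}{4}(d-k)\log(k-1)$, which the paper's argument does not make explicit. The trade-off is that the paper's bounds are completely elementary, whereas your argument relies on the uniform remainder estimate in Stirling's formula on $[1/2,\infty)$; since that estimate is standard (Binet's integral), this is a minor cost for a cleaner and more informative proof.
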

\begin{proof}
    Fix \( n \geq 3\).
    For the sake of notational convenience we will sometimes drop the index \(j\).

    If \(d_j-k_j = 2c\) is constant, we can use the fact that \(\Gamma(x+y)/\Gamma(x) \sim x^y\) for fixed \(y > 0\) and \(x \to \infty\)
    (see, e.g., \cite[p.~938]{jameson}) and  \eqref{eq:cumulants} to conclude that as \(j \to \infty\)
    \begin{align*}
        \cum_n(Z_{d,k}^*)
            &= \left( \pi^{\frac{d-k}{2}} \right)^{1-\frac{n}{2}}
                \left( \frac{\Gamma(\frac{2k-d-1}{2}+c)}{\Gamma(\frac{2k-d-1}{2})} \right)^{\frac{n}{2}}
                \frac{\Gamma(\frac{n(k-1)+1-d}{2})}{\Gamma(\frac{n(k-1)+1-d}{2}+c)}\\
            &\sim \left( \pi^{c} \right)^{1-\frac{n}{2}}
                ( \tfrac{2k-d-1}{2} )^{c\frac{n}{2}}
                ( \tfrac{n(k-1)+1-d}{2} )^{-c}\\
            &= \left( \pi^{c} \right)^{1-\frac{n}{2}}
                \left( \frac{\left(\frac{d-4c-1}{2}\right)^\frac{n}{2}}{\left(\frac{n(d-2c-1)+1-d}{2}\right)} \right)^c,
    \end{align*}
    which diverges since \(n \geq 3\) and \(d_j\to \infty\).  In particular, this proves the assertion if $d_j-k_j\in\{1,\ldots,5\}$. In the following we can therefore assume that $d_j- k_j\ge 6$. 

    The Gamma function satisfies \(\Gamma(x+1) = x \Gamma(x)\) for \(x > 0\) and is increasing on \([2,\infty)\).
   We can thus bound the cumulants given in \eqref{eq:cumulants} from below by
    \begin{align}
    \cum_n(Z_{d,k}^*)
        &\geq \left( \pi^{\frac{d-k}{2}} \right)^{1-\frac{n}{2}}
            \left( \frac{\Gamma(\frac{2k-d-1}{2}+\lfloor \frac{d-k}{2} \rfloor)}{\Gamma(\frac{2k-d-1}{2})} \right)^{\frac{n}{2}}
            \frac{\Gamma(\frac{n(k-1)+1-d}{2})}{\Gamma(\frac{n(k-1)+1-d}{2}+ \lceil \frac{d-k}{2} \rceil)}\nonumber\\
        &= \left( \pi^{\frac{d-k}{2}} \right)^{1-\frac{n}{2}}
            \prod_{j=0}^{\lfloor\frac{d-k}{2}\rfloor -1} \left( \tfrac{2k-d-1}{2}+ j \right)^\frac{n}{2} 
            \prod_{j=0}^{\lceil\frac{d-k}{2}\rceil -1}
            \frac{1}{\left(\frac{n(k-1)+1-d}{2}+j\right)}\nonumber\\
        &\geq \left( \pi^{\frac{d-k}{2}} \right)^{1-\frac{n}{2}}
            \prod_{j=0}^{\lfloor\frac{d-k}{2}\rfloor -1} \frac{\left( \frac{2k-d-1}{2}+ j \right)^\frac{n}{2}}{\left(\frac{n(k-1)+1-d}{2}+j\right)}
            \cdot \frac{1}{\left(\frac{n(k-1)+1-k}{2}\right)}.
            \label{eq:cumulants_2}
    \end{align}
    
    For \( k_j\ge \frac{3}{4}d_j\) and \(d_j \geq 4\) we can then plug the bound
    \[ \prod_{j=0}^{\lfloor\frac{d-k}{2}\rfloor -1} \frac{\left( \frac{2k-d-1}{2}+ j \right)^\frac{n}{2}}{\left(\frac{n(k-1)+1-d}{2}+j\right)}
    \geq \prod_{j=0}^{\lfloor\frac{d-k}{2}\rfloor -1} \frac{\left( \frac{2k-d-1}{2} \right)^\frac{n}{2}}{\left(\frac{n(k-1)+1-k}{2}\right)}
    \geq \left( \frac{(\frac{1}{8}d)^\frac{n}{2}}{nd} \right)^{\lfloor\frac{d-k}{2}\rfloor}
    = \left( \frac{d^{\frac{n}{2}-1}}{8^\frac{n}{2}n}  \right)^{\lfloor\frac{d-k}{2}\rfloor} \]
   into \eqref{eq:cumulants_2} and use \(d^{\frac{n}{2}-1} \geq d^\frac{1}{2}\) to get
    \[ \cum_n(Z_{d,k}^*)
    \geq \left( \pi^{1-\frac{n}{2}} \frac{d^\frac{1}{2}}{8^\frac{n}{2}n} \right)^{\lfloor\frac{d-k}{2}\rfloor}
    \cdot \frac{\pi^{1-\frac{n}{2}}}{\left(\frac{(n-1)(k-1)}{2}\right)}
    \geq \left( \pi^{1-\frac{n}{2}} \frac{d^\frac{1}{2}}{8^\frac{n}{2}n} \right)^{\lfloor\frac{d-k}{2}\rfloor}
    \cdot \frac{\pi^{1-\frac{n}{2}}}{nd} , \]
    which goes to infinity as \(d_j \to \infty\) since \(\lfloor\frac{d-k}{2}\rfloor \geq 3\).

    For \(k_j \leq \frac{3}{4}d_j\) and \(d_j \geq 16\), we use \eqref{eq:cumulants_2} to derive the lower bound
    \begin{align*}
        \cum(Z_{d,k}^*)
        &\geq \left( \pi^{\frac{d-k}{2}} \right)^{1-\frac{n}{2}}
            \prod_{j=0}^{\lfloor\frac{d-k}{2}\rfloor -1} \frac{\left( \frac{1}{2}+ j \right)^\frac{n}{2}}{\left(\frac{n(k-1)+1-k}{2}\right)}
            \cdot \frac{1}{\left(\frac{n(k-1)+1-k}{2}\right)}\\
        &\geq \left( \pi^{\frac{d-k}{2}} \right)^{1-\frac{n}{2}}
             \frac{\left( \frac{1}{2} \left(\lfloor\frac{d-k}{2}\rfloor -1\right)! \right)^\frac{n}{2}}
             {\left(\frac{(n-1)(k-1) }{2}\right)^{\lfloor\frac{d-k}{2}\rfloor +1}}\\
        &\geq \left( \pi^{\frac{d-k}{2}} \right)^{1-\frac{n}{2}} 2^{-\frac{n}{2}}
            \frac{\left(\frac{\lfloor\frac{d-k}{2}\rfloor -1}{e}\right)^{(\lfloor\frac{d-k}{2}\rfloor -1)\frac{n}{2}}}
            {\left(nd\right)^{\lfloor\frac{d-k}{2}\rfloor +1}}\\
        &\geq \left( \pi^{1-\frac{n}{2}} \frac{\left(\frac{\frac{d}{8}-2}{e} \right)^\frac{n}{2}} {nd} \right)^{\lfloor\frac{d-k}{2}\rfloor -1}
            2^{-\frac{n}{2}} \pi^{2(1-\frac{n}{2})} \frac{1}{(nd)^2},
    \end{align*}
    where we used the fact that \(m! \geq (m/e)^m\) for \(m \in \N\) in the third line.
    For sufficiently large \(d_j\), the above expression is lower bounded by
    \[ \left(A\cdot d^\frac{1}{2}\right)^{\lfloor \frac{d}{8} \rfloor -1} \cdot \frac{B}{d^2} \]
    with some constants \(A,B > 0\) (depending on $n$).
    The latter expression goes to infinity as \(d_j \to \infty\).
\end{proof}

Of course, this does not yet prove that \(Z_{d,k}^*\) does not converge in distribution to a normal distribution (or at all).
It is indeed possible for a sequence of functions to converge even though, when written as a power series, all of the coefficients diverge (except for the coefficient of $x^0$).
This is the case, e.g., for \(f_n(x) = \sin(n^2 x)/n + \cos(n^2 x)/n\), $x\in\R$.
Unfortunately, we were not able to determine whether the characteristic functions  converge or not and leave this question as an open problem.

\begin{question*}
    Let \((d_j), (k_j)\) be sequences of integers satisfying \(0 \leq k_j \leq d_j -1\),  \( 2 k_j > d_j + 1 \)
    and \(d_j \to \infty\), \(j\to \infty\).
    What is the asymptotic behavior of \(Z_{d_j,k_j}^*\) as \(j\to\infty\)?
    In particular, are there any conditions on \(d_j\) and \(k_j\) (such as \(k_j / d_j \to 1/2\)) which ensure that \(Z_{d_j,k_j}^*\) converges in distribution, and if so, is the limit distribution Gaussian?
\end{question*}
	\section{Asymptotic covariance matrix}
\label{sec:covariance_matrix}

Consider the random vector \(F^{[m]}_r \coloneqq (F^{(1)}_r, \ldots, F^{(m)}_r)^\top \).
In this section we study the asymptotic covariance matrix \(\Sigma^{[m]}\) of \(F^{[m]}_r\), which we define as
\[ \lim_{r \to \infty} \frac{1}{e^{r(d-1)}} \Sigma(F^{[m]}_r),
    \quad \lim_{r \to \infty} \frac{1}{r e^{r(d-1)}} \Sigma(F^{[m]}_r),
    \quad \lim_{r \to \infty} \frac{1}{e^{2r(k-1)}} \Sigma(F^{[m]}_r), \]
for \(2k<d+1\), \(2k=d+1\) and \(2k>d+1\),  respectively.

\begin{proposition}
\label{prop:covariance_matrix}
    For \(m \geq 2\) the following holds:
    If \(2k \geq d+1\), then \(\Sigma^{[m]}\) has rank one, while it has full rank when \(2k< d+1\).
\end{proposition}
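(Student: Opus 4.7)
The plan is to factor $\Sigma(F^{[m]}_r)$ as $\Gamma\, D(r)\, \Gamma^\top$ for an $r$-independent invertible matrix $\Gamma$ and a diagonal matrix $D(r)$, and then read off the rank from the asymptotics of the diagonal entries. The structural input is that the Crofton-type identity \eqref{eq:crofton_applied} extends to all chaos orders: iterating the Crofton formula that produced \eqref{eq:crofton_applied} in the case $i=1$ and integrating out the flats $E_{i+1}, \ldots, E_a$ in the definition of $f^{(a)}_{r,i}$ one step at a time gives
\[
f^{(a)}_{r,i}(E_1, \ldots, E_i) = \gamma_{a,i}\, f^{(i)}_{r,i}(E_1, \ldots, E_i) \qquad \mu_k^i\text{-a.e.,}
\]
for explicit positive constants $\gamma_{a,i}$ depending only on $d, k, a, i$, with $\gamma_{a,a} = 1$ and $\gamma_{a,1} = C(d,k,a)$. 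The corresponding identity $I_i(f^{(a)}_{r,i}) = \gamma_{a,i}\, I_i(f^{(i)}_{r,i})$ for the chaos terms, together with the orthogonality of multiple stochastic integrals of different orders, yields
\[
\mathrm{Cov}(F^{(a)}_r, F^{(b)}_r) = \sum_{i=1}^{\min(a,b)} \gamma_{a,i}\, \gamma_{b,i}\, V_i(r), \qquad V_i(r) := i!\, A^{(i)}_{r,i},
\]
which is exactly $\Sigma(F^{[m]}_r) = \Gamma\, D(r)\, \Gamma^\top$ with $\Gamma := (\gamma_{a,i})_{a,i=1}^m$ (using the convention $\gamma_{a,i} = 0$ for $i > a$) and $D(r) := \mathrm{diag}(V_1(r), \ldots, V_m(r))$.

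Since $\Gamma$ is lower triangular with unit diagonal, it is invertible independently of $r$, so the rank of any limit point of the suitably normalized $\Sigma(F^{[m]}_r)$ coincides with the rank of the corresponding limit of the normalized $D(r)$. \Cref{lem:A_asymptotics}, applied with intersection order $i$ in place of $m$, supplies the asymptotics of $V_i(r) = i!\, A^{(i)}_{r,i}$ for each $i$. When $2k < d + 1$ one has $2i(d-k) > d-1$ for every $i \ge 1$, so $V_i(r) \asymp e^{r(d-1)}$ throughout, and hence
\[
\det\bigl(e^{-r(d-1)} \Sigma(F^{[m]}_r)\bigr) = \det(\Gamma)^2 \prod_{i=1}^m \frac{V_i(r)}{e^{r(d-1)}}
\]
stays bounded away from zero. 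This forces any limit point of the normalized matrix (in particular $\Sigma^{[m]}$ as defined in the statement) to have full rank $m$.

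When $2k \ge d+1$ a direct comparison of growth rates using \Cref{lem:A_asymptotics} shows that $V_1(r)$ strictly dominates every $V_i(r)$ with $i \ge 2$: for $2k > d+1$ one has $V_1(r) \asymp e^{2r(k-1)}$ while all $V_i(r)$ with $i \ge 2$ are $o(e^{2r(k-1)})$, and analogously $V_1(r) \asymp r\, e^{r(d-1)}$ dominates the rest when $2k = d+1$. With the corresponding normalization the normalized $D(r)$ tends to a diagonal matrix whose only nonzero entry sits in position $(1,1)$, so
\[
\Sigma^{[m]} = \bigl( C(d,k,a)\, C(d,k,b)\, v \bigr)_{a,b=1}^m
\]
for some $v > 0$. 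This matrix has rank exactly one because $\gamma_{a,1} = C(d,k,a) > 0$ for every $a \in \{1,\ldots,m\}$.

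The main obstacle I anticipate is the inductive proof of the identity $f^{(a)}_{r,i} = \gamma_{a,i}\, f^{(i)}_{r,i}$. One has to check that after integrating out $E_{i+1}, \ldots, E_a$ the indicator $\mathbf{1}\{\dim(E_1 \cap \cdots \cap E_a) = d - a(d-k)\}$ appearing in $f^{(a)}_r$ correctly collapses, $\mu_k^i$-almost everywhere in $(E_1, \ldots, E_i)$, to the indicator $\mathbf{1}\{\dim(E_1 \cap \cdots \cap E_i) = d - i(d-k)\}$ carried by $f^{(i)}_r$, and that the iterated Crofton constant works out to the claimed $\gamma_{a,i}$. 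A secondary, quantitative issue arises only if one wishes to identify the entries of $\Sigma^{[m]}$ explicitly in the full-rank case (formula \eqref{eq:covariance_matrix}): this requires upgrading the $\asymp$-bounds of \Cref{lem:A_asymptotics} to genuine limits for $e^{-r(d-1)} V_i(r)$, which calls for a more detailed evaluation of the covariance integrals along the lines already carried out in \cite{BHT}.
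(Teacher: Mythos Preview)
Your approach is correct and, for the rank-one case $2k\ge d+1$, it coincides with the paper's: both isolate the first chaos via \eqref{eq:crofton_applied} and show the remaining terms are of lower order, which your factorisation $\Sigma(F^{[m]}_r)=\Gamma D(r)\Gamma^\top$ simply records in matrix form.

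For the full-rank case the two arguments differ in emphasis. The paper first observes (this is the Remark following the Proposition) that the constraints $m\ge 2$, $d-m(d-k)\ge 0$ and $2k<d+1$ force $m=2$ and $d=2k$, and then computes the two limits $\lim e^{-r(d-1)}A^{(1)}_{r,1}$ and $\lim e^{-r(d-1)}A^{(2)}_{r,2}$ explicitly by dominated convergence; this simultaneously proves that $\Sigma^{[2]}$ exists and yields the explicit matrix \eqref{eq:covariance_matrix}, from which full rank is read off. Your route is lighter: you only use the $\asymp$-bounds of \Cref{lem:A_asymptotics} to keep the determinant of the normalised matrix bounded away from zero, and then invoke existence of $\Sigma^{[m]}$ (which is built into the statement) to conclude. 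That is a genuine shortcut for the rank assertion, at the cost of not producing \eqref{eq:covariance_matrix} and of leaving existence of the limit unproved; you correctly flag this as a secondary issue.

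One remark that may save you work: the ``main obstacle'' you anticipate --- the iterated Crofton identity $f^{(a)}_{r,i}=\gamma_{a,i}f^{(i)}_{r,i}$ for general $i$ --- is not actually needed here. In the rank-one regime only $i=1$ enters, which is precisely \eqref{eq:crofton_applied}. In the full-rank regime one is forced into $m=2$, and then your matrix $\Gamma$ is $2\times 2$ with entries $\gamma_{1,1}=\gamma_{2,2}=1$ (trivial) and $\gamma_{2,1}=C(d,k,2)$ (again \eqref{eq:crofton_applied}); no new Crofton step is required. Your general factorisation is a clean way to organise the argument, but for this particular proposition it does not need more integral-geometric input than the paper already provides.
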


\begin{remark}
    Note that \(m\) must satisfy the condition \(d - m(d-k) \geq 0\).
    One easily checks that the only combinations for \(m, d, k\) that satisfy this condition as well as \(m \geq 2\) and \(2k<d+1\) are those where \(m=2\) and \(d=2k\).
\end{remark}

In the remainder of this section we prove \Cref{prop:covariance_matrix} and derive \(\Sigma^{[2]}\) for \(2k<d+1\).
For \(2k \geq d+1\), note that by \eqref{eq:decomp_F_m}
\[F^{(m)}_r - \E[F^{(m)}_r] = C(d,k,m) \cdot (F^{(1)}_r - \E[F^{(1)}_r]) + \textrm{lower order term}, \]
hence we can argue as in \Cref{sec:comparison} that the asymptotic covariance matrix has rank one.
This proves the first part of \Cref{prop:covariance_matrix}.

Let us from now on assume that \(2k < d+1\).
By the above remark, we can assume that \(m=2\) and \(d=2k\).
We exclude \(k=1\) in the derivation below, since this case has already been treated in \cite[Sec.~4.5.1]{HHT}.
From the Wiener--Itô chaos decomposition and \eqref{eq:crofton_applied} it follows that
\[\begin{aligned}
    F^{(1)}_r &= \E[F^{(1)}_r] + I_1(f^{(1)}_{r,1}),\\
    F^{(2)}_r &= \E[F^{(2)}_r] + C(d,k,2) \cdot I_1(f^{(1)}_{r,1}) + I_2(f^{(2)}_{r,2}).
\end{aligned}\]
Since the first and second chaos term are uncorrelated, the covariance matrix \(\Sigma(F^{[m]})\)  takes the form
\[\begin{aligned}
    \Sigma\left(F^{[2]}_r\right)
        &= \Sigma\left(I_1(f^{(1)}_{r,1}), C(d,k,2) \cdot I_1(f^{(1)}_{r,1})\right) + \Sigma\left(0, I_2(f^{(2)}_{r,2})\right)\\
        &= \V(I_1(f^{(1)}_{r,1})) \begin{pmatrix} 1 & C(d,k,2)\\ C(d,k,2) & C(d,k,2)^2  \end{pmatrix} + \V(I_2(f^{(2)}_{r,2})) \begin{pmatrix} 0&0\\ 0&1 \end{pmatrix}.
\end{aligned} \]

Note that, by definition, \(\V(I_1(f^{(1)}_{r,1})) = A^{(1)}_{r,1}\) and \(\V(I_2(f^{(2)}_{r,2})) = 2 A^{(2)}_{r,2} \).
Using the representation \cite[Eq.~(2.9)]{BHT}  and writing the involved integral as in \cite[Lem.~8 and (19)]{HHT}, we arrive at
\[ A^{(1)}_{r,1} = c^{(1)}_{d,k} \int_0^r \cosh^k(s) \sinh^{d-1-k}(s) \left( \int_0^{\arcosh\left( \frac{\cosh(r)}{\cosh(s)} \right)} \sinh^{k-1}(u) \dint u \right)^2 \dint s, \]
\[ A^{(2)}_{r,2} = c^{(2)}_{d,k} \int_0^r \sinh^{d-1}(s) \dint s, \]
with constants \(c^{(1)}_{d,k} = \omega_k^3\) and \(c^{(2)}_{d,k} = \frac{\omega_1 \omega_d \omega_{d+1}}{4 \omega_{k+1}^2}\).

After substituting \(t = r - s\), we get that
\[ e^{-(d-1)r} A^{(1)}_{r,1} = c^{(1)}_{d,k} \int_0^r \frac{\cosh^k(r-t) \sinh^{d-1-k}(r-t)}{e^{(d-1)r}} \left( \int_0^{\arcosh\left( \frac{\cosh(r)}{\cosh(r-t)} \right)} \sinh^{k-1}(u) \dint u \right)^2 \dint t \]
To bound the integrand, first note that
\[ \frac{\cosh^k(r-t) \sinh^{d-1-k}(r-t)}{e^{(d-1)r}} \leq \frac{e^{(d-1)(r-t)}}{e^{(d-1)r}} = e^{-(d-1)t}, \]
and since
\[ \frac{\cosh(r)}{\cosh(r-t)} = e^t \frac{1 + e^{-2r}}{1 + e^{2t-2r}} \leq e^t \]
and \(\exp(\arcosh(y)) \leq 2y\) for \(y\geq 1\), we can bound
\[\begin{aligned}
    \int_0^{\arcosh\left( \frac{\cosh(r)}{\cosh(r-t)} \right)} \sinh^{k-1}(u) \dint u
        &\leq \int_0^{\arcosh( e^t )} e^{(k-1)u} \dint u\\
        &=\frac{1}{k-1} \left( \exp(\arcosh(e^t))^{k-1} - 1\right)\\
        &\leq \frac{2^{k-1}}{k-1} e^{(k-1)t}.
\end{aligned}\]
The (non-negative) integrand can thus be bounded uniformly in \(r\) from above by
\[ e^{-(d-1)t} \left(\frac{2^{k-1}}{k-1}\right)^2 e^{2(k-1)t} = \left(\frac{2^{k-1}}{k-1}\right)^2 e^{-t}, \]
where we used that \(2k=d\).
We can thus apply the dominated convergence theorem and obtain
\[ \lim_{r\to\infty} e^{-(d-1)r} A^{(1)}_{r,1}
    = c^{(1)}_{d,k} \int_0^\infty \frac{e^{-(d-1)t}}{2^{d-1}} \left( \int_0^{\arcosh(e^t)} \sinh^{k-1}(u) \dint u \right)^2 \dint t. \]
After substituting \(u = \arcosh(y)\), the inner integral becomes
\[\int_1^{e^t} (y^2 - 1)^\frac{k-2}{2} \dint y,\]
the substitution \(x = e^t\) then turns the outer integral into
\[ \int_1^\infty \frac{x^{-d}}{2^{d-1}} \left( \int_1^{x} (y^2 - 1)^\frac{k-2}{2} \dint y \right)^2 \dint x, \]
so that
\[ \lim_{r\to\infty} e^{-(d-1)r} A^{(1)}_{r,1}
    = \frac{c^{(1)}_{d,k}}{2^{d-1}} \int_1^\infty x^{-d} \left( \int_1^{x} (y^2 - 1)^\frac{k-2}{2} \dint y \right)^2 \dint x. \]
(We excluded \(k=1\) for convenience, but one can check that the final result is also valid in this case.)
The second term is much easier.
One again substitutes \(t = r - s\), checks that the dominated convergence theorem applies and obtains
\[ \lim_{r\to\infty} e^{-(d-1)r} A^{(2)}_{r,2}
    = \frac{c^{(2)}_{d,k}}{2^{d-1}} \int_0^\infty e^{-(d-1)t} \dint t
    = \frac{c^{(2)}_{d,k}}{(d-1)2^{d-1}}.  \]
In summary, we arrive at
\begin{align}
&\Sigma(F^{[2]}) = \nonumber\\
&\quad\frac{\omega_k^3}{2^{d-1}} \int_1^\infty x^{-d} \left( \int_1^{x} (y^2 - 1)^\frac{k-2}{2} \dint y \right)^2 \dint x
\begin{pmatrix}
1 & \frac{\omega_1 \omega_{d+1}}{\omega_{k+1}^2}\\
\frac{\omega_1 \omega_{d+1}}{\omega_{k+1}^2} & \left( \frac{\omega_1 \omega_{d+1}}{\omega_{k+1}^2} \right)^2
\end{pmatrix}
+ \begin{pmatrix} 0 & 0\\ 0 &  \frac{\omega_1 \omega_d \omega_{d+1}}{(d-1) 2^{d} \omega_{k+1}^2 } \end{pmatrix}. \label{eq:covariance_matrix}
\end{align}
From this representation the second part of \Cref{prop:covariance_matrix} follows immediately.

\begin{remark}
    The integral in \eqref{eq:covariance_matrix} can be explicitly calculated when \(k\) is even, since the inner integrand is then only a polynomial.
    For odd \(k\) this is in general much harder.
    In \cite[Rem.~11]{HHT} it has been shown that for \(k=1,d=2\) this integral is equal to \(a = 4G\), where 
    \(G \approx 0.915965594\) is Catalan's constant.
    The covariance matrix they arrive at is
    \[\begin{pmatrix}
        4a & \frac{8}{\pi} a\\
        \frac{8}{\pi} a & \frac{16}{\pi^2}a + \frac{1}{\pi}
    \end{pmatrix},\]
    compare \cite[Eq.~(20)]{HHT} (we set the intensity parameter \(t\) to one and switched the order of the rows and columns to conform to our setting and notation).
    Plugging the values \(k=1,d=2\) into our representation \eqref{eq:covariance_matrix} we arrive at
    \[\begin{pmatrix}
        4a & \frac{8}{\pi} a\\
        \frac{8}{\pi} a & \frac{16}{\pi^2}a + 1
    \end{pmatrix}.\]
    The discrepancy is resolved by inserting a missing factor $\pi$ in front of the integral on page 925, line -8, in \cite{HHT} 
    (\(\omega_1\) was used instead of the correct factor \(\omega_2\)).
\end{remark}

\subsection*{Acknowledgements}
The authors have been funded by the German Research Foundation (DFG) through the Priority Programme ``Random Geometric Systems''  (SPP 2265), via the research grant HU 1874/5-1.

	\printbibliography
\end{document}